\newcommand{\be}{\begin{equation}}
\newcommand{\ee}{\end{equation}}
\newcommand{\R}{{\mathbb R}}
\newcommand{\I}{{\mathcal{I}}}
\newcommand{\RR}{{\mathcal{R}}}
\newcommand{\ve}{{\varepsilon}}
\newcommand{\ra}{{\rangle}}
\newcommand{\la}{{\langle}}
\numberwithin{equation}{section}
\numberwithin{figure}{section}
\newtheorem{theorem}{Theorem}[section]
\newtheorem{proposition}[theorem]{Proposition}
\newtheorem{remark}[theorem]{Remark}
\newtheorem{lemma}[theorem]{Lemma}
\newtheorem{corollary}[theorem]{Corollary}
\newtheorem{definition}[theorem]{Definition}
\begin{document}
\vglue-1cm \hskip1cm
\title[Orbital Stability of Periodic Waves]{SUFFICIENT CONDITIONS FOR ORBITAL STABILITY OF PERIODIC TRAVELING WAVES}

\begin{center}

\subjclass[2000]{76B25, 35Q51, 35Q53.}

\keywords{Orbital stability, dispersive equation, periodic waves}

\maketitle

{\bf Giovana Alves }

{Departamento de Matem\'atica - Universidade Estadual de Maring\'a\\
	Avenida Colombo, 5790, CEP 87020-900, Maring\'a, PR, Brazil.}\\
{a\underline{ }\underline{ }giovanaalves@yahoo.com.br}

\vspace{3mm}

{\bf F\'abio Natali}

{Departamento de Matem\'atica - Universidade Estadual de Maring\'a\\
Avenida Colombo, 5790, CEP 87020-900, Maring\'a, PR, Brazil.}\\
{ fmnatali@hotmail.com}

\vspace{3mm}

{\bf Ademir Pastor}

{IMECC-UNICAMP\\
Rua S\'ergio Buarque de Holanda, 651, CEP 13083-859, Campinas, SP,
Brazil.}\\
{ apastor@ime.unicamp.br}

\end{center}

\begin{abstract}
The present paper deals with sufficient conditions for orbital stability of periodic waves of a  general class of evolution equations supporting  nonlinear dispersive waves.  Our method can be seen as an extension to spatially periodic waves of the theory of solitary waves recently developed in \cite{st}. Firstly, our main result do not depend on the parametrization of the periodic wave itself. Secondly, motived by the well known orbital stability criterion for solitary waves, we show that the same criterion holds for periodic waves. In addition, we show that the positiveness of the principal entries of the Hessian matrix related to the ``energy surface function'' are also sufficient to obtain the  stability. Consequently, we can establish the orbital stability of periodic waves for several nonlinear dispersive models. We believe our method can be applied in a wide class of evolution equations; in particular it can be extended to regularized dispersive wave equations.
\end{abstract}

\section{Introduction}
Over the last few years, the study of stability of periodic traveling waves associated
with nonlinear dispersive equations has had a considerable increase. A rich variety of new mathematical problems as well as  physical applications  have emerged. This subject is often studied in connection to the natural symmetries associated to the model (translations  and/or rotations). The perturbations can be taken with respect to several classes, e.g., (i) the class of periodic functions with the same or a multiple of the minimal period as the underlying wave; (ii) the class of localized functions. In the case of shallow-water wave
models (or long internal waves in a density-stratified ocean, ion-acoustic waves in a plasma or acoustic waves on a crystal lattice), it is well known that a formal
stability theory of periodic traveling waves has started with the pioneering work of Benjamin \cite{benjamin}, who considered  the periodic steady solutions (the so called cnoidal
waves) for the Korteweg-de Vries equation (KdV henceforth),
\begin{equation}\label{kdv2}
u_t+uu_x+u_{xxx}=0.
\end{equation}

In this paper, we present sufficient conditions for the orbital stability of periodic traveling-wave solutions related to the following generalization of $(\ref{kdv2})$,
\be\label{rDE}
u_t+(f(u))_x-(\mathcal{M}u)_x=0,
\ee
where $f:\mathbb{R}\rightarrow\mathbb{R}$ is a smooth function and $u:\R\times\R\to\R$ is spatially periodic with period $L>0$. Here $\mathcal{M}$ is a differential or pseudo differential operator which may be defined as a Fourier multiplier by 
\be\label{symbol123}
\widehat{\mathcal{M}g}(\kappa)=\theta(\kappa)\widehat{g}(\kappa), \quad \kappa\in\mathbb{Z},
\ee
where $\theta$ is assumed to be an even and continuous function on $\R$ satisfying
\begin{equation}\label{A1A2}
\upsilon_1|\kappa|^{m}\leq\theta(\kappa)\leq \upsilon_2|\kappa|^{m}, \quad m>0,
\end{equation}
for  $|\kappa|\geq \kappa_0$ and for some $\upsilon_i>0$, $i=1,2$.

 Formally, equation $(\ref{rDE})$ admits the conserved quantities (to this end, a convenient local well-posedness result is welcome to assure them),
\begin{equation}\label{Eu}
P(u)=\frac{1}{2}\int_{0}^{L}\Big(u\mathcal{M}u-W(u)\Big)dx,
\end{equation}
\begin{equation}\label{Fu}
F(u)=\frac{1}{2}\int_{0}^{L}u^2dx,
\end{equation}
and 
\begin{equation}\label{Mu}
M(u)=\int_{0}^{L}udx.
\end{equation}
Here, function $W$ denotes the primitive of $f$, that is, $W'=f$.

Periodic traveling waves for (\ref{rDE}) are solutions of the form $u(x,t)=\phi(x-\omega t)$, where $\omega\in \R$ and $\phi:\R\to\R$ is a smooth periodic function. Substituting this form into (\ref{rDE}), we obtain
\begin{equation}\label{ode-wave}
\mathcal{M}\phi+\omega\phi-f(\phi)+A=0,
\end{equation}
where $A$ is a constant of integration.

The  quantities in $(\ref{Eu})$-$(\ref{Mu})$ allow us to define two new conserved quantities. The first one is the constrained energy 
\begin{equation}\label{lyafun}
G(u)=P(u)+\omega F(u)+AM(u),
\end{equation}
and the second one is the auxiliary quantity
\begin{equation}\label{modquant}
Q(u)=\mu M(u)+ \nu F(u), 
\end{equation}
where $\mu, \nu$ are real constants and at least one of them are nonzero. For the moment, the parameters $\mu$ and $\nu$ do not play any role; however, as we will see in our stability criteria, they can be appropriately chosen in order to obtain the positiveness of the linearized operator.

It is well known that the linearization of $(\ref{rDE})$  around a periodic wave $\phi$ gives rise to the  linear operator
\begin{equation}\label{operator}
\mathcal{L}=\mathcal{M}+\omega-f'(\phi),
\end{equation}
which shall be considered on $L_{per}^2([0,L])$. The relation in \eqref{A1A2} allows to say that $\mathcal{L}$ is a well-defined operator with domain $D(\mathcal{L})=H_{per}^m([0,L])$.
Under our assumption ($H_1$) below, it is not difficult to see the connection between $\mathcal{L}$ and $G$. Indeed, the quadratic form $v\mapsto\langle G''(\phi)v,v\rangle$ is closed, densely defined, and bounded from below on $H_{per}^{\frac{m}{2}}([0,L])$. Consequently,  $\mathcal{L}$ is the unique  self-adjoint linear operator such that (see e.g. \cite[Chapter VI]{kato})
\begin{equation}\label{dual1}
\langle G''(\phi)v,z\rangle=(\mathcal{L}v,z),\ \ \ \ \ \ v\in C_{per}^{\infty}([0,L]),\ z\in H_{per}^{\frac{m}{2}}([0,L]),
\end{equation}
where $G''$ represents the second order Fr\'echet derivative of $G$, $\langle\cdot,\cdot\rangle$ is the duality in $H_{per}^{-\frac{m}{2}}([0,L])$ and $(\cdot,\cdot)$ indicates the inner product in $L_{per}^2([0,L])$. In particular, $G''(\phi)v=\mathcal{I}\mathcal{L}v$, for all $v\in C_{per}^{\infty}([0,L])$, where $\mathcal{I}: H_{per}^{\frac{m}{2}}([0,L])\rightarrow H_{per}^{-\frac{m}{2}}([0,L])$ is the natural injection of $H_{per}^{\frac{m}{2}}([0,L])$ into $H_{per}^{-\frac{m}{2}}([0,L])$ with respect to inner product in $L_{per}^2([0,L])$, that is,
\begin{equation}\label{dual2}
\langle \mathcal{I}u,v\rangle=(u,v),\ \ \ \ \ \ u,\ v\in H_{per}^{\frac{m}{2}}([0,L]).
\end{equation}

Next, we present a brief outline of our work. The main assumptions we assume throughout the paper are the following:

\begin{enumerate}
\item[$(H_0)$] Suppose that $\phi\in C_{per}^{\infty}([0,L])$ is an $L$-periodic traveling wave solution of (\ref{ode-wave}) with $L>0$. The operator $\mathcal{L}$ has only one negative eigenvalue which is simple and zero is a simple eigenvalue whose eigenfunction is $\phi'$.
\end{enumerate}
\begin{enumerate}
\item[$(H_1)$] There exist constants $c_1, c_2>0$ such that
$$(\mathcal{L}v,v)\geq c_1 ||v||_{\frac{m}{2}}^2-c_2||v||^2,$$ for all $v\in C_{per}^{\infty}([0,L])$,
	\end{enumerate}

\begin{enumerate}
	\item[$(H_2)$] There exists $c_3>0$ such that
	$$(\mathcal{L}v,v)\geq c_3 ||v||^2,$$ for all $v\in H_{per}^{m}([0,L])$ satisfying $(v,\phi')=(v,Q'(\phi))=0$.
	\end{enumerate}

Our main goal in the present paper is to show that, under hypothesis $(H_0)-(H_2)$, the periodic traveling wave $\phi$ is orbitally stable in the energy space $H_{per}^{\frac{m}{2}}([0,L])$ by the periodic flow of  $(\ref{rDE})$. From \eqref{ode-wave} and \eqref{lyafun}, $\phi$ is a critical point of the constrained energy $G$, that is $G'(\phi)=0$. Thus, it is expected that the Hessian $G''(\phi)$ determines the stability of $\phi$. From \eqref{dual1} we see that assumption ($H_2$) implies that $\phi$ is a minimum of $G$ when restricted to a suitable codimension two manifold.  Therefore, our results are based on the construction of a convenient Lyapunov functional. The arguments used here follow the approach in \cite{st}, where the author established the orbital stability of standing waves for abstract Hamiltonian systems of the form
\begin{equation}\label{stha}
u_t(t)=JE'(u(t)),
\end{equation}
posed on a Hilbert space $X$, where $J$ is an invertible skew-symmetric  bounded operator on $X$ and $E$ is the  associated energy. In particular, it is assumed  that \eqref{stha} is invariant under the action of a one-dimensional group. This enabled the author to prove the orbital stability of standing waves for a large class of nonlinear Schr\"odinger-type equation with a potential. It should be noted, however, that the general theory in \cite{st} cannot be directly  applied to our case. In fact, even though $(\ref{rDE})$ can be written as an Hamiltonian system with $J=\partial_x$, such an operator is clearly not invertible on $H^{\frac{m}{2}}_{per}([0,L])$.  Hence, in the present paper,  we modify the approach in \cite{st} in order to consider the case when $J$ is not invertible. As we will see below, this allows us to obtain new results concerning the stability of periodic waves.

The existence and orbital stability of  periodic waves for equations like \eqref{rDE} has gained much more attention after the work in \cite{ABS}. In that paper, the authors considered the KdV equation and proved the existence and orbital stability of explicit periodic waves with the zero mean property, say, $\omega\mapsto\phi_\omega$, with $\omega$ belonging to an unbounded interval. Fundamentally, the authors have used the fact that the Hessian matrix 
$d''(\omega):=\frac{d}{d\omega}F(\phi_\omega)$ is positive, where $d$ is the energy curve function defined by $d(\omega)=P(\phi_\omega)+\omega F(\phi_\omega)$.

 In \cite{johnson}, the author established sufficient conditions for the orbital stability of periodic waves to the generalized KdV equation 
\begin{equation}\label{gBBM}
u_t+u^pu_x+u_{xxx}=0,
\end{equation}
where $p\geq1$ is an integer. He has constructed smooth periodic waves $\phi(\cdot,A,B,\omega)$ whose period depends smoothly on the triple $(A,B,\omega)\in\widetilde{\mathcal{O}}$, where $\widetilde{\mathcal{O}}\subset\mathbb{R}^3$ is an open set. Here $B$ is an integration constant which appears in the quadrature form associated with the second order differential equation in $(\ref{ode-wave})$ (with $\mathcal{M}=-\partial_x^2$ and $f(v)=\frac{v^{p+1}}{p+1}$) and can be interpreted as the associated energy. So, by assuming that the sign of the Jacobian determinants $L_{B}$, $\{L,M\}_{A,B}$ and $\{L,M,F\}_{A,B,\omega}$ (in the notation of that paper) are positive at the point $(A_0,B_0,\omega_0)\in\widetilde{\mathcal{O}}$, one has the orbital stability of $\phi(\cdot,A_0,B_0,\omega_0)$ (see also \cite{BJK1}). In particular, the positiveness of such determinants were checked in the solitary wave limit (whether $1\leq p<4$)  and for periodic waves near the equilibrium solution.

\begin{remark}
It should be noted the positiveness of $L_B$ implies that $\mathcal{L}$ satisfies the assumptions in ($H_0$), while the positiveness of $\{L,M\}_{A,B}$ and $\{L,M,F\}_{A,B,\omega}$ imply that $\mathcal{L}$ is a positive operator on a convenient manifold (see Lemmas 4.2 and 4.4 in \cite{johnson}).
\end{remark}

If a suitable parametrization of the solutions of \eqref{ode-wave} is available, the general strategy for proving the orbital stability, roughly speaking, consists into two steps: first one proves the stability with respect to perturbations in a convenient manifold, frequently supporting restrictions on the conserved quantities, and, second, one extends the class of perturbations to the whole space by using, for instance, a triangle inequality argument; e.g. \cite{angulo1}, \cite{natali}, \cite{an},  \cite{natali1}, \cite{BJ},  \cite{BJK1}, \cite{gss1}, \cite{gss2}, \cite{hur}, \cite{johnson}, \cite{johnson1} to cite but a few. To the latter, is often necessary to use that the Hessian matrix of the energy surface function (or some quantities involving Jacobian determinants as in \cite{johnson}) is non-singular. Generally speaking, this verification turns out to be a hard task.
For one hand, the method we employ here has the advantage that it does not require this kind of information and, in particular, a smooth parametrization of the periodic solutions of \eqref{ode-wave} is not needed. This advantage is generated by the construction of a special Lyapunov functional (see Section \ref{sec3}).  On the other hand, at a first glance, our method does not apply when the spectral properties for the linearized operator in ($H_0$) are not met or when one consider periodic perturbation with a multiple of the minimal period of the underlying wave or localized perturbations (this will subject for further investigation). It is to be highlighted that, from this last point of view, the works \cite{BJK1},  \cite{DK1}, \cite{johnson}, \cite{DK} are more general than our.

Our results are also closed connected with spectral stability. In \cite{BJK1}, \cite{BJK2} and \cite{DK}, the authors proved sufficient conditions for the spectral stability associated with the generalized KdV equation (the results also apply for a large class of  models as those in $(\ref{rDE})$). The criteria are bases on the so called Krein-Hamiltonian stability index
\begin{equation}\label{krein}
K_{{\rm Ham}}=k_i^{-}+k_c+k_r,
\end{equation}
where $k_r$ stands for the number of real eigenvalues of $\partial_x\mathcal{L}$ in the open right-half plane, $k_c$ is the number of complex eigenvalues in the open right-half plane, and $k_i^{-}$ is the total negative Krein signature (see \cite{BJK1} for the precise definition). In particular, if $K_{\rm Ham}=0$ then one has the spectral stability\footnote{If $K_{\rm Ham}=0$ then there are no purely imaginary eigenvalues of negative Krein signature. This is enough to show that spectral stability implies orbital stability; see e.g., \cite{BJK1} and \cite{DK}.}.
The results in the above mentioned papers mainly concerns in establishing simple expressions for the index $K_{\rm Ham}$. In particular, it is assumed that $(\mathcal{L}^{-1}1,1)\neq0$ (note that the assumption $\{L,M\}_{B,A}$ in \cite{BJK1} also reduces to $(\mathcal{L}^{-1}1,1)\neq0$; see Remark 3.5 in that paper). The arguments in \cite{DK} require, besides $(\mathcal{L}^{-1}1,1)\neq0$, that
\begin{equation}\label{Dmatrix}
\mathcal{D}=\frac{1}{(\mathcal{L}^{-1}1,1)}\left|\begin{array}{llll}
(\mathcal{L}^{-1}\phi,\phi) & & (\mathcal{L}^{-1}\phi,1)\\\\
(\mathcal{L}^{-1}\phi,1) & & (\mathcal{L}^{-1}1,1)\end{array}\right|\neq0.
\end{equation}

As we will see below (Theorem \ref{teostab}), in our approach  it is not necessary to assume  $(\mathcal{L}^{-1}1,1)\neq0$. In fact, if there exists a smooth surface of periodic waves for \eqref{ode-wave}, say, $(\omega,A)\in\mathcal{O}\subset\mathbb{R}^2\mapsto\phi=\phi_{(\omega,A)}\in C_{per}^{\infty}([0,L])$, with fixed period $L>0$, we will see that the orbital stability can be determined whether one of the following assumptions holds: $(\mathcal{L}^{-1}1,1)=-M_{A}(\phi)<0$, $(\mathcal{L}^{-1}\phi,\phi)=-F_{\omega}(\phi)<0$ or $M_{\omega}(\phi)^2-F_{\omega}(\phi)M_A(\phi)>0$. In the second case, we recover the criterion used for the case of solitary waves as determined in \cite{bo} and \cite{gss1}. 

\indent Our paper is organized as follows. In Section \ref{PL}, we present results concerning the positivity of the linearized operator $\mathcal{L}$. Section \ref{sec3} is devoted to show the orbital stability of periodic waves related to the general model in $(\ref{rDE})$. Applications will be presented in Section \ref{sec4}. In Section \ref{sec5}, we see that the arguments  in Sections \ref{PL} and \ref{sec3} can be used to show a similar criterion for the regularized version of $(\ref{rDE})$.
\bigskip

\noindent \textbf{Notation.} In what follows, we denote by $||\cdot||_{s}$ and $(\cdot,\cdot)_s$ as the norm and the inner product in $H_{per}^s([0,L])$, $s\geq0$. For short, we set $||\cdot||_0:=||\cdot||$ and $(\cdot,\cdot)_0:=(\cdot,\cdot)$.

\section{Positivity of the operator $\mathcal{L}$}\label{PL}

We start this section with some technical and useful results regarding the operator $\mathcal{L}$ and its dual representation $G''(\phi)$. First, in order to simplify the notation, let us define 
$$\mathcal{X}=\{u\in H_{per}^{\frac{m}{2}}([0,L]);\ (v,\phi')=(v,Q'(\phi))=0\}.$$ 

\begin{lemma}\label{lema1}
Suppose that assumptions $(H_1)$ and $(H_2)$ hold. There exists $c_4>0$ such that
\begin{equation}\label{est1}
\langle G''(\phi)v,v\rangle\geq c_4 ||v||_{\frac{m}{2}}^2,
\end{equation}
for all $v\in\mathcal{X}$.
\end{lemma}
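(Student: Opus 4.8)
The plan is to interpolate between the coercivity estimate $(H_1)$ on the full space and the positivity estimate $(H_2)$ on the codimension-two subspace $\mathcal{X}$, using the dual representation $\langle G''(\phi)v,v\rangle = (\mathcal{L}v,v)$ from \eqref{dual1}. First I would fix $v \in \mathcal{X} \cap C_{per}^\infty([0,L])$; the general case $v\in\mathcal{X}$ follows afterward by the density of $C_{per}^\infty([0,L])$ in $H_{per}^{m/2}([0,L])$ together with the continuity of the quadratic form $v\mapsto\langle G''(\phi)v,v\rangle$ on $H_{per}^{m/2}([0,L])$ (which holds since the form is closed and bounded, as recalled before \eqref{dual1}). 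Note that $(v,\phi')=(v,Q'(\phi))=0$ is preserved under $H_{per}^{m/2}$-limits because these are bounded linear functionals on that space.

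Next, for $v\in\mathcal{X}\cap C_{per}^\infty([0,L])$ and any $\lambda\in(0,1)$, write
\[
\langle G''(\phi)v,v\rangle = (\mathcal{L}v,v) = \lambda(\mathcal{L}v,v) + (1-\lambda)(\mathcal{L}v,v).
\]
To the first term I apply $(H_1)$, getting $\lambda(\mathcal{L}v,v)\geq \lambda c_1\|v\|_{m/2}^2 - \lambda c_2\|v\|^2$; to the second term I apply $(H_2)$, which is available precisely because $v\in\mathcal{X}$ satisfies the two orthogonality constraints (and $v\in H_{per}^m$ when $v$ is smooth), giving $(1-\lambda)(\mathcal{L}v,v)\geq (1-\lambda)c_3\|v\|^2$. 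Adding these,
\[
\langle G''(\phi)v,v\rangle \geq \lambda c_1\|v\|_{m/2}^2 + \big((1-\lambda)c_3 - \lambda c_2\big)\|v\|^2.
\]
Choosing $\lambda>0$ small enough that $(1-\lambda)c_3 - \lambda c_2 \geq 0$ — for instance any $\lambda \leq c_3/(c_2+c_3)$ — the $\|v\|^2$ coefficient is nonnegative, and the estimate \eqref{est1} follows with $c_4 = \lambda c_1 = c_1 c_3/(c_2+c_3)$.

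I do not expect a serious obstacle here; this is essentially a two-line interpolation argument. The only point requiring a little care is the passage from smooth $v$ to general $v\in\mathcal{X}$: one must ensure both that $(\mathcal{L}v,v)$ (equivalently $\langle G''(\phi)v,v\rangle$) extends continuously to $H_{per}^{m/2}$ — guaranteed by the closedness/boundedness of the form and the identity \eqref{dual1} — and that the constraint set $\mathcal{X}$ is closed in $H_{per}^{m/2}$, which holds since $\phi'$ and $Q'(\phi)$ induce continuous functionals on $H_{per}^{m/2}([0,L])$. With these observations the inequality \eqref{est1}, first proved on the dense subspace, extends to all of $\mathcal{X}$ by approximation, completing the proof.
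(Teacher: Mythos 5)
Your proposal is correct and is essentially the paper's own argument: the paper also combines $(H_1)$ and $(H_2)$ on smooth $v$ (substituting the bound $\|v\|^2\leq c_3^{-1}(\mathcal{L}v,v)$ from $(H_2)$ into $(H_1)$ to get $(1+c_2/c_3)(\mathcal{L}v,v)\geq c_1\|v\|_{m/2}^2$), which is algebraically the same as your convex-combination with $\lambda=c_3/(c_2+c_3)$ and yields the same constant $c_4=c_1c_3/(c_2+c_3)$, followed by the same density step. Your extra remarks on the closedness of $\mathcal{X}$ and the continuity of the form only make explicit what the paper leaves implicit.
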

\begin{proof}
By density, it suffices to assume that $v$ belongs to $C_{per}^{\infty}([0,L])$. Assumptions $(H_1)$ and $(H_2)$ give us
\begin{equation}\label{est2}
\left(1+\frac{c_2}{c_3}\right)(\mathcal{L}v,v)\geq c_1 ||v||_{\frac{m}{2}}^2.
\end{equation}
The conclusion then follows from $(\ref{dual1})$.
\end{proof}

\begin{remark}\label{garding}
Assume $v\in C_{per}^{\infty}([0,L])$. If $\omega>0$, one has from $(\ref{symbol123})$, the smoothness of $f$, and $(\ref{A1A2})$ that
$$
(\mathcal{L}v,v)=\displaystyle\int_0^L\Big(v\mathcal{M}v+\omega v^2-f'(\phi)v^2\Big)dx\geq  D_1||v||_{\frac{m}{2}}^2-D_2||v||^2,
$$
where $D_i$, $i=1,2$ are positive constants that do not depend on $v$. Thus, inequality in $(H_1)$ holds in this particular case.
\end{remark}

 Now, let $\mathcal{R}: H_{per}^{\frac{m}{2}}([0,L])\rightarrow H_{per}^{-\frac{m}{2}}([0,L])$ be the Riesz isomorphism with respect to inner product in $H_{per}^{\frac{m}{2}}([0,L])$, that is, 
\begin{equation}\label{dual3}
\la \mathcal{R}u,v\ra =(u,v)_{\frac{m}{2}},\ \ \ \ \ \ \ \ u,\ v\in H_{per}^{\frac{m}{2}}([0,L]).
\end{equation}

 Lemma $\ref{lema1}$ establishes the positivity of $G''(\phi)$ under an orthogonality in $L_{per}^2([0,L])$. The next result shows that the same positivity holds if we assume the orthogonality in $H_{per}^{\frac{m}{2}}([0,L])$.

\begin{lemma}\label{lema2}
Let $\mathcal{I}$ be the operator defined in $(\ref{dual2})$. Let
\begin{equation}\label{orto1}
\begin{array}{lllll}\mathcal{Z}&=&\{\phi',\mathcal{R}^{-1}\mathcal{I}Q'(\phi)\}^{\bot}\\
&=&\{z\in H_{per}^{\frac{m}{2}}([0,L]);\ (z,\phi')_{\frac{m}{2}}=(z,\mathcal{R}^{-1}\mathcal{I}Q'(\phi))_{\frac{m}{2}}=0\}
\end{array}
\end{equation}
Then, there exists $c_5>0$ such that 
\begin{equation}\label{est3}
\langle G''(\phi)z,z\rangle\geq c_5 ||z||_{\frac{m}{2}}^2,
\end{equation}
for all $z\in \mathcal{Z}$.
\end{lemma}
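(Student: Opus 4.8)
The goal is to transfer the coercivity estimate of Lemma~\ref{lema1}, which holds on the $L^2$-orthogonal complement $\mathcal{X}$ of $\{\phi',Q'(\phi)\}$, to the $H^{m/2}_{per}$-orthogonal complement $\mathcal{Z}$ of $\{\phi',\mathcal{R}^{-1}\mathcal{I}Q'(\phi)\}$. The key observation is that these two subspaces define the \emph{same} pair of linear constraints: for $z\in H^{m/2}_{per}([0,L])$ one has $(z,\phi')_{\frac m2}=\langle\mathcal{R}z,\phi'\rangle$ by \eqref{dual3}, and $(z,\mathcal{R}^{-1}\mathcal{I}Q'(\phi))_{\frac m2}=\langle\mathcal{R}\mathcal{R}^{-1}\mathcal{I}Q'(\phi),z\rangle=\langle\mathcal{I}Q'(\phi),z\rangle=(Q'(\phi),z)$ by \eqref{dual2}; so the choice of $\mathcal{R}^{-1}\mathcal{I}$ applied to $Q'(\phi)$ is precisely designed so that orthogonality in $H^{m/2}_{per}$ against these two vectors is equivalent to the $L^2$-type pairings $(z,\phi')=(z,Q'(\phi))=0$ that appear in Lemma~\ref{lema1} (keeping in mind $\phi'\in C^\infty_{per}$, so $(z,\phi')$ makes sense as an $L^2$ pairing, and $Q'(\phi)$ is smooth as well since $Q$ is a combination of $M$ and $F$). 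Hence $\mathcal{Z}=\mathcal{X}$ as sets, and the estimate \eqref{est3} is nothing but \eqref{est1} with $c_5=c_4$.

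First I would make precise that $\phi'$ and $Q'(\phi)$ both lie in $C^\infty_{per}([0,L])\subset H^{m/2}_{per}([0,L])$, so that all the pairings below are legitimate; in particular $\mathcal{I}Q'(\phi)\in H^{-m/2}_{per}$ and $\mathcal{R}^{-1}\mathcal{I}Q'(\phi)\in H^{m/2}_{per}$ is well defined. Next I would verify the two identities
\[
(z,\phi')_{\frac m2}=\langle\mathcal{R}z,\phi'\rangle,\qquad (z,\mathcal{R}^{-1}\mathcal{I}Q'(\phi))_{\frac m2}=(z,Q'(\phi)),
\]
the first being immediate from \eqref{dual3} and the second following by writing $(z,\mathcal{R}^{-1}\mathcal{I}Q'(\phi))_{\frac m2}=\langle\mathcal{R}(\mathcal{R}^{-1}\mathcal{I}Q'(\phi)),z\rangle=\langle\mathcal{I}Q'(\phi),z\rangle=(Q'(\phi),z)$ using \eqref{dual3} and then \eqref{dual2}. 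A small point that I would not skip: to conclude $(z,\phi')_{m/2}=0\iff(z,\phi')=0$ I should either observe that $\phi'$ is an eigenfunction-type smooth element for which $\mathcal{R}\phi'$ is again a concrete smooth function, or more cleanly just note that $\mathcal{Z}$ was \emph{defined} using the $H^{m/2}_{per}$ inner product against $\phi'$ directly, so here the natural move is to instead show $\mathcal{Z}\subset\mathcal{X}$ is false in general and that the correct matching uses $\mathcal{R}^{-1}\mathcal{I}\phi'$ — I should double-check the paper's intent; assuming it is as written, the constraint $(z,\phi')_{m/2}=0$ together with $(z,\mathcal{R}^{-1}\mathcal{I}Q'(\phi))_{m/2}=(z,Q'(\phi))=0$ still cuts out a codimension-two subspace on which I can run the argument, and the real content is the second identity.

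Given the identification, the proof concludes in one line: for $z\in\mathcal{Z}$ we have $(z,Q'(\phi))=0$ and, after possibly replacing the first constraint by its $L^2$ form, $(z,\phi')=0$, hence $z\in\mathcal{X}$ (or at least $z$ satisfies the hypotheses of Lemma~\ref{lema1}), and Lemma~\ref{lema1} gives $\langle G''(\phi)z,z\rangle\ge c_4\|z\|_{\frac m2}^2$; take $c_5=c_4$. The main obstacle — and the only genuinely delicate point — is the bookkeeping around which Riesz/injection map is applied to which generator: one must be careful that $(z,\phi')_{m/2}=0$ is equivalent to $(z,\phi')=0$, which is \emph{not} automatic and is why in the definition of $\mathcal{Z}$ one expects to see $\mathcal{R}^{-1}\mathcal{I}\phi'$ rather than $\phi'$ itself; if the statement really uses $\phi'$, I would check whether $\mathcal{R}\phi'$ is proportional to $\phi'$ (it is not, in general, unless $\mathcal{M}$ and multiplication interact nicely), and if not, reconcile by noting $\phi'\in C^\infty_{per}$ and using a density/duality argument to pass between the two orthogonality conditions. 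Everything else is a direct substitution into the already-proven Lemma~\ref{lema1} and the definitions \eqref{dual2}, \eqref{dual3}.
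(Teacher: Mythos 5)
Your proposal has a genuine gap: the two subspaces $\mathcal{Z}$ and $\mathcal{X}$ are \emph{not} equal, so the lemma does not reduce to a one-line substitution into Lemma \ref{lema1}. You correctly verify that the second constraints match, since $(z,\mathcal{R}^{-1}\mathcal{I}Q'(\phi))_{\frac{m}{2}}=\langle\mathcal{I}Q'(\phi),z\rangle=(z,Q'(\phi))$; that part is exactly the computation the paper uses. But the first constraints do not match: $(z,\phi')_{\frac{m}{2}}=0$ is orthogonality against the functional $\mathcal{R}\phi'$, while $(z,\phi')=0$ is orthogonality against $\mathcal{I}\phi'$, and these cut out the same hyperplane only if $\phi'$ is an eigenfunction of $\mathcal{R}^{-1}\mathcal{I}$ (e.g.\ a single Fourier mode), which it is not in general. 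You notice this yourself, but neither of your proposed repairs works: no density or duality argument converts one orthogonality condition into the other, and Lemma \ref{lema1} is only available on $\mathcal{X}$, not on an arbitrary codimension-two subspace, so ``running the argument'' directly on $\mathcal{Z}$ begs the question.

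The missing idea is a projection argument that exploits the kernel of $G''(\phi)$. Given $z\in\mathcal{Z}$, set $\psi=\phi'/\|\phi'\|$ and $v=z-(z,\psi)\psi$, the $L^2$ projection off $\phi'$. Then $v\in\mathcal{X}$: the first constraint holds by construction, and the second because $(\psi,Q'(\phi))=(\psi,\mu+\nu\phi)=0$. Since $\mathcal{L}\phi'=0$ gives $G''(\phi)\phi'=0$, the quadratic form does not see the correction: $\langle G''(\phi)z,z\rangle=\langle G''(\phi)v,v\rangle\geq c_4\|v\|_{\frac{m}{2}}^2$ by Lemma \ref{lema1}. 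Finally --- and this is precisely where the $H_{per}^{\frac{m}{2}}$-orthogonality $(z,\phi')_{\frac{m}{2}}=0$ in the definition of $\mathcal{Z}$ is used --- Cauchy--Schwarz gives $\|z\|_{\frac{m}{2}}^2=(z,v)_{\frac{m}{2}}\leq\|z\|_{\frac{m}{2}}\|v\|_{\frac{m}{2}}$, hence $\|v\|_{\frac{m}{2}}\geq\|z\|_{\frac{m}{2}}$ and the estimate transfers with $c_5=c_4$. Without this three-step mechanism (project in $L^2$, use $G''(\phi)\phi'=0$, compare norms via the $H_{per}^{\frac{m}{2}}$-orthogonality) your argument cannot close.
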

\begin{proof}
Let  $\psi=\frac{\phi'}{||\phi'||}$. Take any $z\in \mathcal{Z}$ and define
$$v:=z-(z,\psi)\psi.$$
Let us show that $v\in \mathcal{X}$. In fact, since $||\psi||=1$ one has $(v,\psi)=(z,\psi)-(z,\psi)(\psi,\psi)=0$. Moreover, the fact that $Q'(\phi)=\mu+\nu\phi$ enable us to deduce
$$(v,Q'(\phi))=(\mathcal{R}^{-1}\mathcal{I}Q'(\phi),v)_{\frac{m}{2}}=-(z,\psi)( \mathcal{R}^{-1}\mathcal{I}Q'(\phi),\psi)_{\frac{m}{2}}=-(z,\psi)(\psi,Q'(\phi))=0.$$
An application of Lemma $\ref{lema1}$ yields the existence of $c_4>0$ such that
\begin{equation}\label{est4}
\la G''(\phi)v,v\ra\geq c_4||v||_{\frac{m}{2}}^2,
\end{equation}
that is,
\begin{equation}\label{est5}
(\mathcal{R}^{-1}G''(\phi)v,v)_{\frac{m}{2}}\geq c_4||v||_{\frac{m}{2}}^2.
\end{equation}
Next, since $\phi$ is a smooth function and $\mathcal{L}\phi'=0$, from $(\ref{dual1})$ one infers that $G''(\phi)\phi'=0$. Therefore, from the definition of $v$ one has 
\begin{equation}\label{Gv}
G''(\phi)v=G''(\phi)z.
\end{equation}
Let $S:H_{per}^{\frac{m}{2}}([0,L])\rightarrow H_{per}^{\frac{m}{2}}([0,L])$ be the self-adjoint operator defined by $S=\mathcal{R}^{-1}G''(\phi)$. It is not difficult to prove that $(Sz,z)_{\frac{m}{2}}=(Sv,v)_{\frac{m}{2}}$. In addition, since $z\in\mathcal{Z}$, we have from Cauchy-Schwarz inequality,
$$||z||_{\frac{m}{2}}^2=(z,v+(z,\psi)\psi)_{\frac{m}{2}}=(z,v)_{\frac{m}{2}}\leq ||z||_{\frac{m}{2}}||v||_{\frac{m}{2}},$$
that is, $||z||_{\frac{m}{2}}\leq ||v||_{\frac{m}{2}}$. Finally, combining the last inequality with $(\ref{est5})$,
\begin{equation}\label{est6}(Sz,z)_{\frac{m}{2}}=(Sv,v)_{\frac{m}{2}}\geq c_4||v||_{\frac{m}{2}}^2\geq c_4||z||_{\frac{m}{2}}^2.\end{equation}
Thus, from $(\ref{dual3})$ and $(\ref{est6})$ we obtain the desired result.
\end{proof}

\begin{lemma}\label{lema3}
There are positive constants $\sigma$ and $c_6$ such that
$$(Sv,v)_{\frac{m}{2}}+2\sigma(\mathcal{R}^{-1}\mathcal{I}Q'(\phi),v)_{\frac{m}{2}}\geq c_6||v||_{\frac{m}{2}}^2,$$
for all $v\in\{\phi'\}^{\bot}=\{u\in H_{per}^{\frac{m}{2}}([0,L]);\ (u,\phi')_{\frac{m}{2}}=0\}$.
\end{lemma}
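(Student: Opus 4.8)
The plan is to deduce this from Lemma \ref{lema2} by a standard spectral‐decomposition argument adapted to the indefinite quadratic form $(Sv,v)_{\frac{m}{2}}$. First I would restrict attention to the closed subspace $\{\phi'\}^\bot\subset H_{per}^{\frac{m}{2}}([0,L])$. On this subspace we have $Sv=S z$‐type identities available (since $G''(\phi)\phi'=0$, the operator $S=\mathcal{R}^{-1}G''(\phi)$ kills $\phi'$ and leaves $\{\phi'\}^\bot$ invariant as a self‐adjoint operator there). By assumption $(H_0)$ and the dual correspondence \eqref{dual1}, $S$ restricted to $\{\phi'\}^\bot$ has exactly one negative eigenvalue, say $-\lambda<0$ with unit eigenvector $\chi$, and the rest of its spectrum is bounded below by a positive constant $\beta>0$ (here $(H_1)$–$(H_2)$, via Lemma \ref{lema2}, guarantee a genuine spectral gap rather than merely $\inf\sigma>0$ failing). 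Write $g:=\mathcal{R}^{-1}\mathcal{I}Q'(\phi)$, which one should first check is \emph{not} orthogonal in $H_{per}^{\frac{m}{2}}$ to $\chi$: indeed, if $(g,\chi)_{\frac{m}{2}}=0$ then $\chi\in\mathcal{Z}$ after projecting off $\phi'$ (which is automatic here since $\chi\in\{\phi'\}^\bot$), and then Lemma \ref{lema2} would force $(S\chi,\chi)_{\frac{m}{2}}\ge c_5\|\chi\|_{\frac m2}^2>0$, contradicting $(S\chi,\chi)_{\frac m2}=-\lambda<0$. So $a:=(g,\chi)_{\frac{m}{2}}\neq0$.

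Next I would decompose an arbitrary $v\in\{\phi'\}^\bot$ as $v=t\chi+p$ with $t=(v,\chi)_{\frac{m}{2}}$ and $p\perp\chi$ (in $H_{per}^{\frac m2}$), noting $p$ still lies in $\{\phi'\}^\bot$. Then
\[
(Sv,v)_{\frac m2}=-\lambda t^2+(Sp,p)_{\frac m2}\ge -\lambda t^2+\beta\|p\|_{\frac m2}^2,
\]
and $(g,v)_{\frac m2}=at+(g,p)_{\frac m2}$, so the left‐hand side of the desired inequality is bounded below by
\[
-\lambda t^2+\beta\|p\|_{\frac m2}^2+2\sigma a t+2\sigma (g,p)_{\frac m2}.
\]
Using $2\sigma|(g,p)_{\frac m2}|\le \frac{\beta}{2}\|p\|_{\frac m2}^2+\frac{2\sigma^2}{\beta}\|g\|_{\frac m2}^2$, the $p$‐terms are controlled: we get a lower bound $-\lambda t^2+2\sigma a t-\frac{2\sigma^2}{\beta}\|g\|_{\frac m2}^2+\frac{\beta}{2}\|p\|_{\frac m2}^2$. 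The point is that we are free to pick the sign of $\sigma>0$ so that $2\sigma a t\ge 0$ is not available for all $t$; instead one completes the square in $t$: $-\lambda t^2+2\sigma a t=-\lambda\big(t-\tfrac{\sigma a}{\lambda}\big)^2+\tfrac{\sigma^2 a^2}{\lambda}$. This still leaves a genuinely negative term $-\lambda(t-\sigma a/\lambda)^2$, so a cruder bound alone does not close the argument — and this is the main obstacle.

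To overcome it, I would use Lemma \ref{lema2} a second time, applied to the vector $w:=v-\alpha g$ for a suitable scalar $\alpha$ chosen to make $w\in\mathcal{Z}$, i.e. $(w,g)_{\frac m2}=0$ and $(w,\phi')_{\frac m2}=0$; the latter holds because $v,g\in\{\phi'\}^\bot$ (one checks $(g,\phi')_{\frac m2}=\langle\mathcal{I}Q'(\phi),\phi'\rangle=(Q'(\phi),\phi')=0$ since $Q'(\phi)=\mu+\nu\phi$ integrates against $\phi'$ to zero), and the former fixes $\alpha=(v,g)_{\frac m2}/\|g\|_{\frac m2}^2$. Then Lemma \ref{lema2} gives $(Sw,w)_{\frac m2}\ge c_5\|w\|_{\frac m2}^2$, which expands to
\[
(Sv,v)_{\frac m2}-2\alpha (Sg,v)_{\frac m2}+\alpha^2(Sg,g)_{\frac m2}\ge c_5\|w\|_{\frac m2}^2.
\]
Rearranging produces a lower bound for $(Sv,v)_{\frac m2}$ in terms of $(g,v)_{\frac m2}$ and $\|v\|_{\frac m2}$ with a coefficient in front of a term of the form $2\sigma(g,v)_{\frac m2}$ once one sets $\sigma$ appropriately in terms of $c_5,\|g\|_{\frac m2},(Sg,g)_{\frac m2}$; combining this with the elementary estimate above and choosing $\sigma$ small enough to absorb the bad cross terms yields the coercivity constant $c_6>0$. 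The technical heart is thus the quantitative bookkeeping needed to balance the negative contribution $-\lambda(t-\sigma a/\lambda)^2$ against the gain from Lemma \ref{lema2} on $w$; this is where I would spend the bulk of the estimates, and I expect it to be the only genuinely delicate point, the rest being routine Hilbert‐space manipulation.
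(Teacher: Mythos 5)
There is a genuine gap, and in fact your own computation in the second paragraph exposes it: the inequality as printed in the statement is \emph{false}. The left-hand side $(Sv,v)_{\frac m2}+2\sigma(\mathcal{R}^{-1}\mathcal{I}Q'(\phi),v)_{\frac m2}$ is the sum of a quadratic and a \emph{linear} functional of $v$, so on the ray $v=t v_0$, where $v_0\in\{\phi'\}^{\bot}$ is the projection of the negative direction of $\mathcal{L}$ guaranteed by $(H_0)$ (so that $(Sv_0,v_0)_{\frac m2}<0$ because $S\phi'=0$), it equals $t^2(Sv_0,v_0)_{\frac m2}+2\sigma t(g,v_0)_{\frac m2}\to-\infty$ as $|t|\to\infty$, while the right-hand side grows like $t^2$. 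The residual term $-\lambda\bigl(t-\sigma a/\lambda\bigr)^2$ you isolate is therefore not a deficiency of a crude estimate but a genuine obstruction, and your closing claim that a second application of Lemma \ref{lema2} to $w=v-\alpha g$ plus ``choosing $\sigma$ small enough'' absorbs it cannot be made rigorous: no choice of $\sigma$ repairs a statement that fails along a whole line. The intended statement --- the one the paper actually proves and the one used in \eqref{V}--\eqref{l14} --- has the pairing \emph{squared}, i.e.\ $(Sv,v)_{\frac m2}+2\sigma(\mathcal{R}^{-1}\mathcal{I}Q'(\phi),v)_{\frac m2}^2\geq c_6\|v\|_{\frac m2}^2$; this is consistent with the term $2\sigma\alpha^2\beta^2$ appearing in the paper's display \eqref{est10} and with $\langle V''(\phi)v,v\rangle$ in Proposition \ref{lyalemma2}.

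Once the square is restored, your second decomposition is essentially the paper's proof: set $w=g/\|g\|_{\frac m2}$, $\alpha=(v,w)_{\frac m2}$, $z=v-\alpha w\in\mathcal{Z}$, expand $(Sv,v)_{\frac m2}=\alpha^2(Sw,w)_{\frac m2}+2\alpha(Sw,z)_{\frac m2}+(Sz,z)_{\frac m2}$, bound the cross term by Young's inequality against $\tfrac{c_5}{2}\|z\|_{\frac m2}^2$, and observe that the added term $2\sigma(g,v)_{\frac m2}^2=2\sigma\alpha^2\beta^2$ is \emph{quadratic} in $\alpha$; choosing $\sigma$ \emph{large} (not small) makes the total coefficient of $\alpha^2$ at least $\tfrac{c_5}{2}$, giving the bound $\tfrac{c_5}{2}(\alpha^2+\|z\|_{\frac m2}^2)=\tfrac{c_5}{2}\|v\|_{\frac m2}^2$. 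The spectral decomposition of $S|_{\{\phi'\}^{\bot}}$ in your first paragraph is then unnecessary (and would itself require justifying the claimed gap above the single negative eigenvalue in the $H_{per}^{\frac m2}$ inner product, which does not follow verbatim from $(H_0)$, stated for $\mathcal{L}$ on $L_{per}^2$); Lemma \ref{lema2} alone suffices.
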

\begin{proof}
In fact, from $(\ref{dual2})$ and $(\ref{dual3})$, we infer that 
\begin{equation}
(\mathcal{R}^{-1}\mathcal{I}Q'(\phi),\phi')_{\frac{m}{2}}=\la \mathcal{I}Q'(\phi),\phi'\ra=(Q'(\phi),\phi')=0.
\label{orto2}\end{equation}
Let $w=\frac{\mathcal{R}^{-1}\mathcal{I}Q'(\phi)}{||\mathcal{R}^{-1}\mathcal{I}Q'(\phi)||_{\frac{m}{2}}}$. Thus, given any $v\in \{\phi'\}^{\bot}$, we define
$$z=v-\alpha w,$$
where $\alpha=(v,w)_{\frac{m}{2}}$. It is easy to see that $z\in\mathcal{Z}$. Thus, Lemma $\ref{lema2}$ implies
\begin{equation}\label{est7}
\begin{array}{llll}
(Sv,v)_{\frac{m}{2}}&=&\alpha^2(Sw,w)_{\frac{m}{2}}+2\alpha(Sw,z)_{\frac{m}{2}}+(Sz,z)_{\frac{m}{2}}\\\\
&\geq& \alpha^2(Sw,w)_{\frac{m}{2}}+2\alpha(Sw,z)_{\frac{m}{2}}+c_5||z||_{\frac{m}{2}}^2.
\end{array}
\end{equation}
But, from Cauchy-Schwarz and Young's inequalities,
$$2\alpha(Sw,z)_{\frac{m}{2}}\leq \frac{c_5}{2}||z||_{\frac{m}{2}}^2+\frac{2\alpha^2}{c_5}||Sw||_{\frac{m}{2}}^2.$$
Therefore, 
\begin{equation}\label{est8}
(Sv,v)_{\frac{m}{2}}\geq \alpha^2(Sw,w)_{\frac{m}{2}}-\left(\frac{c_5}{2}||z||_{\frac{m}{2}}^2+\frac{2\alpha^2}{c_5}||Sw||_{\frac{m}{2}}^2\right)+c_5||z||_{\frac{m}{2}}^2.
\end{equation}
Let $\beta:=||\mathcal{R}^{-1}\mathcal{I}Q'(\phi)||_{\frac{m}{2}}$. One has, 
$$(\mathcal{R}^{-1}\mathcal{I}Q'(\phi),v)_{\frac{m}{2}}=\alpha\beta.$$
Now, choose $\sigma>0$ large enough such that
\begin{equation}
(Sw,w)_{\frac{m}{2}}-\frac{2}{c_5}||Sw||_{\frac{m}{2}}^2+2\sigma\beta^2\geq \frac{c_5}{2}
\label{est9}\end{equation}
It is clear that $c_5$ does not depend on $v$. Hence,
\begin{equation}\label{est10}
\begin{array}{llll}
(Sv,v)_{\frac{m}{2}}&+&\displaystyle 2\sigma(\mathcal{R}^{-1}\mathcal{I}Q'(\phi),v)_{\frac{m}{2}}\\\\&\geq&\displaystyle\alpha^2(Sw,w)_{\frac{m}{2}}-\left(\frac{c_5}{2}||z||_{\frac{m}{2}}^2+\frac{2\alpha^2}{c_5}||Sw||_{\frac{m}{2}}^2\right)+c_5||z||_{\frac{m}{2}}^2+2\sigma\alpha^2\beta^2\\\\
&=&\displaystyle\alpha^2\left((Sw,w)_{\frac{m}{2}}-\frac{2}{c_5}||Sw||_{\frac{m}{2}}^2+2\sigma\beta^2\right) +\frac{c_5}{2}||z||_{\frac{m}{2}}^2\\\\
&\geq&\displaystyle\frac{c_5}{2}\left(\alpha^2+||z||_{\frac{m}{2}}^2\right)\\\\
&=&\displaystyle\frac{c_5}{2}||v||_{\frac{m}{2}}^2.
\end{array}
\end{equation}
The result is thus proved with $c_6=\frac{c_5}{2}$.
\end{proof}

\section{Lyapunov function and orbital stability}\label{sec3}

In this section, we will prove our main theorem. 
Before presenting the result itself, we need to introduce some notation and give some preliminary tools. In fact, since equation $(\ref{rDE})$ is invariant under translations, we define the orbit generated by $\phi$ as
\begin{equation}\label{orbit}
\Omega_{\phi}=\{\phi(\cdot+r);\ r\in\mathbb{R}\}.
\end{equation}
In $H_{per}^{\frac{m}{2}}([0,L])$, we introduce the pseudometric $d$ by
$$d(f,g)=\inf\{||f-g(\cdot+r)||_{\frac{m}{2}},r\in\mathbb{R}\}.$$
It is to be observed that, by definition, the distance between $f$ and $g$ is measured by the distance between $f$ and the orbit generated by $g$. 
Given $\varepsilon>0$, the $\varepsilon$-neighborhood of $\Omega_{\phi}$ is defined by
$$\Omega_{\phi}^{\varepsilon}=\{v\in H_{per}^{\frac{m}{2}}([0,L]);\ d(v,\Omega_{\phi})<\varepsilon\}.$$

The precise definition of orbital stability is given next.

\begin{definition}\label{stadef}
Let $\phi$ be a traveling wave solution for \eqref{rDE}. We say that $\phi$ is orbitally stable in $H_{per}^{\frac{m}{2}}([0,L])$ provided that, given $\ve>0$, there exists $\delta>0$ with the following property: if $u_0\in H_{per}^{s}([0,L])$, for some $s\geq \frac{m}{2}$, satisfies $\|u_0-\phi\|_{\frac{m}{2}}<\delta$, then the solution, $u(t)$, of \eqref{rDE} with initial condition $u_0$ exist for all $t\geq0$ and satisfies
$$
d(u(t),\Omega_\phi)<\ve, \qquad \mbox{for all}\,\, t\geq0.
$$
Otherwise, we say that $\phi$ is orbitally unstable in $H_{per}^{\frac{m}{2}}([0,L])$.
\end{definition}

\begin{remark}
Note that in Definition \ref{stadef} we are implicitly assuming that a global well-posedness result for \eqref{rDE} holds in some Sobolev space $H_{per}^{s}([0,L])$, for some $s\geq \frac{m}{2}$.
\end{remark}

\begin{lemma}\label{lema4}
Given $\rho>0$ and $v\in \Omega_{\phi}^{\rho}$, there exists $r_1\in \mathbb{R}$ such that 
\begin{equation}\label{est11}
||v-\phi(\cdot+r_1)||_{\frac{m}{2}}<\rho
\end{equation}
and
\begin{equation}\label{est12}
(v-\phi(\cdot+r_1),\phi'(\cdot+r_1))_{\frac{m}{2}}=0.
\end{equation}
\begin{proof}
Let us define the function $f(r)=||v-\phi(\cdot+r)||_{\frac{m}{2}}^2$, $r\in\mathbb{R}$. Since $v$ and $\phi$ are periodic, $f$ assumes its minimum at a point $r_1$, which without lost of generality can be assumed to belong to the interval $[0,L)$. Thus, the smoothness of $f$  guarantees the existence of   $r_1\in[0,L)$  such that $(\ref{est11})$ and $(\ref{est12})$ hold.
\end{proof}
\end{lemma}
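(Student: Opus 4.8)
The plan is to obtain $r_1$ as a minimizer of the real-valued function $f(r):=\|v-\phi(\cdot+r)\|_{\frac{m}{2}}^{2}$, $r\in\mathbb{R}$, and then to read off \eqref{est11} from the fact that the minimal value of $f$ equals $d(v,\Omega_\phi)^{2}$, and \eqref{est12} from the vanishing of $f'$ at the minimizer.

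First I would record the structural facts about $f$. Since the $H_{per}^{\frac{m}{2}}([0,L])$-norm is translation invariant and $\phi$ is $L$-periodic, $f$ is $L$-periodic, so it suffices to analyze $f$ on the compact interval $[0,L]$. Next, because $\phi\in C_{per}^{\infty}([0,L])$ by $(H_0)$, the curve $r\mapsto\phi(\cdot+r)$ is of class $C^{1}$ from $\mathbb{R}$ into $H_{per}^{\frac{m}{2}}([0,L])$ with derivative $r\mapsto\phi'(\cdot+r)$; this follows from a Taylor-remainder estimate controlled by $\sup_{r}\|\phi''(\cdot+r)\|_{\frac{m}{2}}<\infty$, or, alternatively, from term-by-term differentiation of the rapidly convergent Fourier series of $\phi$. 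Writing $f(r)=\|v\|_{\frac{m}{2}}^{2}-2(v,\phi(\cdot+r))_{\frac{m}{2}}+\|\phi\|_{\frac{m}{2}}^{2}$ and using bilinearity of the inner product, one gets that $f$ is $C^{1}$ with $f'(r)=-2(v-\phi(\cdot+r),\phi'(\cdot+r))_{\frac{m}{2}}$.

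With these facts in hand I would conclude as follows. By continuity, $f$ attains its minimum on $[0,L]$ at some point $r_1$, and by periodicity this is the global minimum of $f$ over $\mathbb{R}$; hence $f(r_1)=\inf_{r\in\mathbb{R}}\|v-\phi(\cdot+r)\|_{\frac{m}{2}}^{2}=d(v,\Omega_\phi)^{2}$, and since $v\in\Omega_\phi^{\rho}$ this is $<\rho^{2}$, which gives \eqref{est11}. Since $f$ is differentiable and $r_1$ is a global minimum (a boundary minimizer can be viewed as interior by periodicity), necessarily $f'(r_1)=0$, and by the formula above this is precisely \eqref{est12}.

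There is no essential obstacle in this lemma; the only point deserving a moment's care is the justification that $r\mapsto\phi(\cdot+r)$ is a differentiable $H_{per}^{\frac{m}{2}}([0,L])$-valued curve and that one may differentiate $f$ under the inner product, which is exactly where the smoothness of $\phi$ assumed in $(H_0)$ is used. The rest is elementary calculus on the circle together with the definition of $d$.
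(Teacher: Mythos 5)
Your proof is correct and follows exactly the paper's argument: minimize $f(r)=\|v-\phi(\cdot+r)\|_{\frac{m}{2}}^{2}$ over a period, obtain \eqref{est11} from $f(r_1)=d(v,\Omega_\phi)^2<\rho^2$, and \eqref{est12} from $f'(r_1)=0$. You supply the differentiability of $r\mapsto\phi(\cdot+r)$ and the explicit formula for $f'$, details the paper leaves implicit, but the route is the same.
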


As we already said, the proof of our main result is based on the construction of a Lyapunov function. Let us make clear what we mean by this in our context.

\begin{definition}\label{lydef}
A function $V:H_{per}^{\frac{m}{2}}([0,L])\to\mathbb{R}$ is said to be a Lyapunov function for the orbit $\Omega_\phi$ if the following properties hold.
\begin{itemize}
\item[(i)] There exists $\rho>0$ such that $V:\Omega_\phi^\rho\to\mathbb{R}$ is of class $C^2$ and, for all $v\in\Omega_\phi$,
$$
V(v)=0 \quad\mbox{and}\quad \qquad V'(v)=0.
$$
\item[(ii)] There exists $c>0$ such that, for all $v\in\Omega_\phi^\rho$,
$$
V(v)\geq c[d(v,\Omega_\phi)]^2.
$$
\item[(iii)] For all $v\in\Omega_\phi^\rho$, there hold
$$
\langle V'(v),\partial_xv\rangle=0.
$$
\item[(iv)] If $u(t)$ is a global solution of the Cauchy problem associated with \eqref{rDE} with initial datum $u_0$, then $V(u(t))=V(u_0)$, for all $t\geq0$.
\end{itemize}
\end{definition}

The next step in then the construction of a Lyapunov function. To do so, let us set
$$
q_1=G(\phi), \qquad q_2=Q(\phi).
$$
Given any positive constant $\sigma$, define $V:H_{per}^{\frac{m}{2}}([0,L])\to\R$ by
\begin{equation}\label{l12}
V(v)=G(v)-q_1+\sigma(Q(v)-q_2)^2.
\end{equation}

We now prove the main result of this subsection.

\begin{proposition}\label{lyalemma2}
Assume that the Cauchy problem associated with $(\ref{rDE})$ is globally well-posed in a convenient Sobolev space $H_{per}^s([0,L])$, $s\geq \frac{m}{2}$. There exists $\sigma>0$ such that the functional defined in \eqref{l12} is a Lyapunov function for the orbit $\Omega_\phi$.
\end{proposition}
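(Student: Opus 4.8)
The plan is to verify, one by one, the four defining properties of a Lyapunov function in Definition \ref{lydef} for the functional $V$ in \eqref{l12}, with the value of $\sigma$ being exactly the one provided by Lemma \ref{lema3}. First I would record the basic variational identities: since $\phi$ solves \eqref{ode-wave} and $G(u)=P(u)+\omega F(u)+AM(u)$, equation \eqref{lyafun} together with \eqref{ode-wave} gives $G'(\phi)=0$; by translation invariance the same holds along the whole orbit, i.e. $G'(\phi(\cdot+r))=0$ and $Q(\phi(\cdot+r))=Q(\phi)=q_2$, $G(\phi(\cdot+r))=q_1$ for all $r\in\mathbb{R}$. From these, property (i) is immediate: $V$ is $C^2$ on a neighborhood $\Omega_\phi^\rho$ because $G$ and $Q$ are smooth (here I would invoke the smoothness/boundedness of $f$ and \eqref{A1A2} to see $G\in C^2(H_{per}^{m/2})$), $V(\phi(\cdot+r))=G(\phi(\cdot+r))-q_1+\sigma(Q(\phi(\cdot+r))-q_2)^2=0$, and $V'(v)=G'(v)+2\sigma(Q(v)-q_2)Q'(v)$ vanishes on $\Omega_\phi$ since both $G'(\phi(\cdot+r))=0$ and the scalar factor $Q(\phi(\cdot+r))-q_2=0$.

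Property (iii) follows from the fact that all three functionals $P,F,M$ — hence $G$ and $Q$ — are invariant under the translation flow $\partial_x$: differentiating $G(v(\cdot+r))$ and $Q(v(\cdot+r))$ in $r$ at $r=0$ gives $\langle G'(v),\partial_x v\rangle=0$ and $\langle Q'(v),\partial_x v\rangle=0$, so $\langle V'(v),\partial_x v\rangle=\langle G'(v),\partial_x v\rangle+2\sigma(Q(v)-q_2)\langle Q'(v),\partial_x v\rangle=0$. Property (iv) is the conservation of $V$ along the flow: $P$, $F$, $M$ are conserved quantities for \eqref{rDE} (this is where the assumed global well-posedness enters, to justify the formal conservation laws rigorously), hence $G(u(t))$ and $Q(u(t))$ are constant in $t$, and therefore so is $V(u(t))=G(u(t))-q_1+\sigma(Q(u(t))-q_2)^2$.

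The substantive step is property (ii), the coercivity estimate $V(v)\ge c\,[d(v,\Omega_\phi)]^2$ on $\Omega_\phi^\rho$. Here I would fix $v\in\Omega_\phi^\rho$, use Lemma \ref{lema4} to pick a translate $\phi_{r_1}:=\phi(\cdot+r_1)$ realizing a near-minimal distance with the orthogonality $(v-\phi_{r_1},\phi_{r_1}')_{m/2}=0$, and write the perturbation $w=v-\phi_{r_1}$, which lies in $\{\phi_{r_1}'\}^\bot$ and has small $H_{per}^{m/2}$-norm. By translation invariance it is enough to argue with $r_1=0$, so $w\in\{\phi'\}^\bot$ with $\|w\|_{m/2}$ small. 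Next I would Taylor-expand: $G(\phi+w)-q_1=G(\phi)+\langle G'(\phi),w\rangle+\tfrac12\langle G''(\phi)w,w\rangle+o(\|w\|_{m/2}^2)=\tfrac12\langle G''(\phi)w,w\rangle+o(\|w\|_{m/2}^2)$ since $G'(\phi)=0$; and $Q(\phi+w)-q_2=\langle Q'(\phi),w\rangle+\tfrac12\langle Q''(\phi)w,w\rangle$, where (using $Q'(\phi)=\mu+\nu\phi$, so $Q$ is quadratic) in fact $Q(\phi+w)-q_2=\langle Q'(\phi),w\rangle+\tfrac{\nu}{2}\|w\|^2$. Squaring, $(Q(v)-q_2)^2=\langle Q'(\phi),w\rangle^2+O(\|w\|_{m/2}^3)$. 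Writing $\langle G''(\phi)w,w\rangle=(Sw,w)_{m/2}$ and $\langle Q'(\phi),w\rangle=(\mathcal R^{-1}\mathcal I Q'(\phi),w)_{m/2}$ via \eqref{dual1}–\eqref{dual3}, I get
$$
V(v)=\tfrac12\Big[(Sw,w)_{m/2}+2\sigma\,(\mathcal R^{-1}\mathcal I Q'(\phi),w)_{m/2}^2\cdot\tfrac{1}{?}\Big]+\dots
$$
— and here is the one point that needs care: the quadratic form appearing in Lemma \ref{lema3} is $(Sw,w)_{m/2}+2\sigma(\mathcal R^{-1}\mathcal I Q'(\phi),w)_{m/2}$, linear in the second term, whereas $V$ produces $2\sigma(\mathcal R^{-1}\mathcal I Q'(\phi),w)_{m/2}^2$, which is quadratic and nonnegative. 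The resolution I would use: the bad direction is the component of $w$ along $w_0:=\mathcal R^{-1}\mathcal I Q'(\phi)/\|\cdot\|_{m/2}$; decompose $w=\alpha w_0+z$ with $z\in\mathcal Z$; then $\langle Q'(\phi),w\rangle=\alpha\beta+O(\|w\|^2)$ with $\beta>0$, Lemma \ref{lema2} controls $(Sz,z)_{m/2}\ge c_5\|z\|_{m/2}^2$, the cross and $(Sw_0,w_0)$ terms are absorbed by Young's inequality exactly as in the proof of Lemma \ref{lema3}, and the extra $2\sigma\alpha^2\beta^2$ coming from the squared term plays the same role as the $2\sigma\beta^2$ in \eqref{est9} — choosing $\sigma$ large enough makes the coefficient of $\alpha^2$ positive. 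The higher-order remainders $o(\|w\|_{m/2}^2)$ are absorbed by shrinking $\rho$. This yields $V(v)\ge c\|w\|_{m/2}^2=c\,\|v-\phi_{r_1}\|_{m/2}^2\ge c\,[d(v,\Omega_\phi)]^2$ after possibly readjusting $c$ and using $\|v-\phi_{r_1}\|_{m/2}\ge d(v,\Omega_\phi)$. The main obstacle is precisely making this quadratic-versus-linear discrepancy in the $Q$-term harmless while keeping track of the cubic error terms uniformly over the orbit; I expect the bookkeeping to mirror Lemma \ref{lema3} closely, and translation invariance reduces the uniformity over $r_1$ to a single base point.
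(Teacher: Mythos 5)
Your proposal is correct and follows the paper's argument essentially verbatim: parts (i), (iii) and (iv) are verified exactly as in the paper, and for part (ii) you arrive at the same second variation $\langle V''(\phi)w,w\rangle=(Sw,w)_{\frac{m}{2}}+2\sigma\big(\mathcal{R}^{-1}\mathcal{I}Q'(\phi),w\big)_{\frac{m}{2}}^2$ and conclude via a Taylor expansion with $o(\|w\|_{\frac{m}{2}}^2)$ remainder, Lemma \ref{lema4}, and translation invariance. The ``quadratic-versus-linear discrepancy'' you flag is in fact only a typo in the statement of Lemma \ref{lema3}: its proof (note the term $2\sigma\alpha^2\beta^2$ in \eqref{est10}, coming from $(\mathcal{R}^{-1}\mathcal{I}Q'(\phi),v)_{\frac{m}{2}}=\alpha\beta$) actually establishes the inequality with the \emph{squared} pairing $\big(\mathcal{R}^{-1}\mathcal{I}Q'(\phi),v\big)_{\frac{m}{2}}^2$, which is precisely the form needed here, so your inline re-derivation is sound but duplicates what that lemma already proves.
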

\begin{proof}
Since $G$ and $Q$ are smooth conserved quantities of \eqref{rDE} and the Cauchy problem associated with \eqref{rDE} is assumed to be globally well-posed, it is clear that part (iv) in Definition \ref{lydef} is satisfied and $V$ is of class $C^2$. Since $V(\phi)=0$ and the functionals $G$ and $Q$ are invariant by translations, we have $V(v)=0$, for all $v\in\Omega_\phi$. In addition, because
\begin{equation}\label{l13}
\langle V'(u),v\rangle=\langle G'(u),v\rangle+2\sigma(Q(u)-q_2)\langle Q'(u),v\rangle
\end{equation}
for all $u,v\in H_{per}^{\frac{m}{2}}([0,L])$, and $\phi$ is a critical point of $G$, it also clear that $V'(\phi)=0$. By observing that
$\phi(\cdot+r)$ is also a critical point of $G$, it then
follows that $V'(v)=0$ for all $v\in \Omega_\phi$. Part (i) of
Definition \ref{lydef} is also established for any $\rho>0$.

Since
$$
Q(v(\cdot+r))=Q(v), \qquad G(v(\cdot+r))=G(v)
$$
for all $r\in\R$ and $v\in H_{per}^{\frac{m}{2}}([0,L])$, we can take the derivatives with respect to $r$ in order to see that part (iii) in Definition \ref{lydef} is also satisfied for any $\rho>0$.

Finally, let us check part (ii). From \eqref{l13}, we obtain
$$
\langle V''(u)v,v\rangle=\langle G''(u)v,v\rangle+2\sigma(Q(u)-q_2)\langle Q''(u)v,v\rangle+2\sigma\langle Q'(u),v\rangle^2.
$$
From $(\ref{dual3})$ and the fact that $Q'(\phi)\in C_{per}^{\infty}([0,L])$, enable us to conclude
\begin{equation}\label{V}
\begin{array}{llll}
\langle V''(\phi)v,v\rangle&=&\langle G''(\phi)v,v\rangle+2\sigma\langle Q'(\phi),v\rangle^2\\\\
&=&(\RR^{-1}G''(\phi)v,v)_{\frac{m}{2}}+2\sigma\big(Q'(\phi),v)^2\\\\
&=&(Sv,v)_{\frac{m}{2}}+2\sigma\la\mathcal{I}Q'(\phi),v\ra^2\\\\
&=&(Sv,v)_{\frac{m}{2}}+2\sigma(\mathcal{R}^{-1}\mathcal{I}Q'(\phi),v)_{\frac{m}{2}}^2.
\end{array}
\end{equation}
 Hence,
$$
\langle
V''(\Phi)v,v\rangle=\big(Sv,v)_{\frac{m}{2}}+2\sigma\big(\mathcal{R}^{-1}\I Q'(\phi),v)_{\frac{m}{2}}^2.
$$
Thus, from Lemma \ref{lema3} we deduce the existence of  positive constants $c_6$ and $\sigma$ such that
\begin{equation}\label{l14}
\langle V''(\Phi)v,v\rangle\geq c_6\|v\|_{\frac{m}{2}}^2,
\end{equation}
for all $v\in\{\phi'\}^\perp$. Since $V$ is of class $C^2$, a Taylor expansion gives
$$
V(v)=V(\phi)+\langle V'(\phi),v-\phi\rangle+\frac{1}{2}\langle V''(\phi)(v-\phi),v-\phi\rangle+h(v),
$$
where $h$ is a function satisfying
$$
\lim_{v\to\phi}\frac{h(v)}{\|v-\phi\|^2_{\frac{m}{2}}}=0.
$$
Thus, we can select $\rho>0$ such that
\begin{equation}\label{l15}
|h(v)|\leq\frac{c_6}{4}\|v-\phi\|^2_{\frac{m}{2}},  \quad \mbox{for all}\, v\in B_\rho(\phi).
\end{equation}
By noting that $V(\phi)=0$ and $V'(\phi)=0$, and using \eqref{l14} and \eqref{l15}, it follows that
\begin{equation}\label{l16}
\begin{split}
V(v)&=\frac{1}{2}\langle V''(\phi)(v-\phi),v-\phi\rangle+h(v)\\
&\geq \frac{c_6}{2}\|v-\phi\|^2_{\frac{m}{2}}-\frac{c_6}{4}\|v-\phi\|^2_{\frac{m}{2}}\\
&=\frac{c_6}{4}\|v-\phi\|^2_{\frac{m}{2}}\\
&\geq \frac{c_6}{4}[d(v,\Omega_\phi)]^2,
\end{split}
\end{equation}
provided that $\|v-\phi\|_{\frac{m}{2}}<\rho$ and $v-\phi\in\{\phi'\}^\perp$.

Now take any $v\in\Omega_\phi^\rho$. Since $\rho>0$, from Lemma
\ref{lema4} there exist $r_1\in\R$ such that
$u:=v(\cdot-r_1)\in B_\rho(\phi)$ and
$$
\big(v-\phi(\cdot+r_1),\phi'(\cdot+r_1)\big)_{\frac{m}{2}}=0,
$$
which mean that
$\|u-\phi\|_{\frac{m}{2}}<\rho$ and $u-\phi\in\{\phi'\}^\perp$. Consequently, \eqref{l16} implies
$$
V(v)=V(u)\geq \frac{c_6}{4}[d(u,\Omega_\phi)]^2=\frac{c_6}{4}[d(v,\Omega_\phi)]^2.
$$
This proves part (ii) and completes the proof of the proposition.
\end{proof}

Now we prove our orbital stability result.

\begin{theorem}\label{stateo}
Under assumption ($H_0$)-($H_2$), the periodic traveling wave solution $\phi$ of \eqref{rDE} is orbitally stable in $H_{per}^{\frac{m}{2}}([0,L])$.
\end{theorem}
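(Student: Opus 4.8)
The strategy is the classical Lyapunov/contradiction argument, now powered by the functional $V$ constructed in Proposition \ref{lyalemma2}. Since all hypotheses $(H_0)$--$(H_2)$ are in force (note that $(H_0)$ is what makes the construction of $V$ possible at all, via the positivity Lemmas \ref{lema1}--\ref{lema3}), Proposition \ref{lyalemma2} provides a $\sigma>0$ and a $\rho>0$ such that $V$ defined in \eqref{l12} is a Lyapunov function for $\Omega_\phi$ in the sense of Definition \ref{lydef}; in particular $V$ vanishes together with $V'$ on $\Omega_\phi$, is conserved along the flow, and satisfies $V(v)\geq c[d(v,\Omega_\phi)]^2$ on $\Omega_\phi^\rho$.

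First I would fix $\ve>0$, which we may assume satisfies $\ve<\rho$, and use the continuity of $V$ at $\phi$ together with $V(\phi)=0$ to pick $\delta>0$ (shrinking so that $\delta<\ve$) such that $\|u_0-\phi\|_{\frac{m}{2}}<\delta$ implies $V(u_0)<\frac{c\ve^2}{4}$, say. Here I would also invoke the assumed global well-posedness in $H^s_{per}([0,L])$, $s\geq\frac{m}{2}$, so that for $u_0$ in this class the solution $u(t)$ exists for all $t\geq0$ and depends continuously on $t$ in the $H^{\frac{m}{2}}_{per}$-norm; this continuity is what will be needed to run the bootstrap. The conservation property (iv) then gives $V(u(t))=V(u_0)<\frac{c\ve^2}{4}$ for all $t\geq0$.

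Next comes the continuity/bootstrap step, which is the heart of the argument. Define $T=\sup\{t\geq0:\ u(s)\in\Omega_\phi^\ve\ \text{for all}\ 0\leq s\leq t\}$. Since $d(u_0,\Omega_\phi)\leq\|u_0-\phi\|_{\frac{m}{2}}<\delta<\ve$ and $t\mapsto d(u(t),\Omega_\phi)$ is continuous, $T>0$. Suppose for contradiction $T<\infty$; then by continuity $d(u(T),\Omega_\phi)=\ve$, so $u(T)\in\Omega_\phi^\rho$ (as $\ve<\rho$) and the lower bound (ii) applies: $V(u(T))\geq c[d(u(T),\Omega_\phi)]^2=c\ve^2$. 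This contradicts $V(u(T))=V(u_0)<\frac{c\ve^2}{4}$. Hence $T=\infty$, i.e.\ $d(u(t),\Omega_\phi)<\ve$ for all $t\geq0$, which is exactly orbital stability in the sense of Definition \ref{stadef}.

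I expect the only genuine subtlety to be making the bootstrap rigorous: one must ensure that $u(t)$ stays in the open set $\Omega_\phi^\rho$ on which $V$ is $C^2$ and the estimate (ii) holds, and that the map $t\mapsto d(u(t),\Omega_\phi)$ is continuous — both of which follow from $\ve<\rho$ and from the $H^{\frac{m}{2}}_{per}$-continuity of the flow guaranteed by well-posedness. Everything else is a direct transcription of the Lyapunov scheme; the real work has already been done in Sections \ref{PL} and \ref{sec3}. A final remark worth inserting is that, strictly speaking, this argument shows stability for initial data in $H^s_{per}$ with $s\geq\frac{m}{2}$ but measures perturbations and the conclusion in the $H^{\frac{m}{2}}_{per}$-norm, matching Definition \ref{stadef}.
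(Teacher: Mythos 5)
Your proposal is correct and follows essentially the same route as the paper: both run the standard Lyapunov continuation/bootstrap argument using Proposition \ref{lyalemma2}, with only the cosmetic difference that you bootstrap directly at level $\ve$ (after normalizing $\ve<\rho$) whereas the paper works in $\Omega_\phi^\rho$ with the constant $c\min\{\rho^2/4,\ve^2\}$ and concludes with the $\ve$-bound at the end.
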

\begin{proof}
	Having disposed a Lyapunov function, the proof of orbital stability is quite standard (see e.g., \cite{st} and \cite{np}). For the sake of completeness we give the main steps. Fix $\ve>0$ and let $V:\Omega_\phi^\rho\to\R$ be the Lyapunov function given in Proposition \ref{lyalemma2}.  By using the continuity of $V$ and the fact that $V(\phi)=0$, we obtain the existence of $\delta\in(0,\rho)$ such that
	$$
	V(v)=V(v)-V(\phi)<c\min\left\{\frac{\rho^2}{4},\ve^2 \right\}, \qquad v\in B_\delta(\phi),
	$$
	where $c>0$ is the constant appearing in Definition \ref{lydef}.
	Since  $V$ is invariant by translations, 
	\begin{equation}\label{e1}
	V(v)<c\min\left\{\frac{\rho^2}{4},\ve^2 \right\}, \qquad v\in \Omega_\phi^\delta.
	\end{equation}
	Let $u_0\in H_{per} ^{\frac{m}{2}}([0,L])$ be a function such that $u_0\in B_\delta(\phi)$. Since it is assumed a convenient global well-posedness result, the solution, say $u(t)$, of the Cauchy problem associated to \eqref{rDE} with initial data $u_0$ is defined for all $t\geq0$. Let $J$ be the interval defined as
	$$
	J=\{s>0; \, u(t)\in \Omega_\phi^\rho \,\,\mbox{for\,\,all}\,\,t\in[0,s)\}.
	$$
	The continuity of $u(t)$ immediately implies that $J\neq\emptyset$ and $\inf J=0$. Let us show that $J=[0,\infty)$, that is, $s^*:=\sup J=\infty$. Assume by contradiction that $s^*<\infty$. Parts (ii) and (iv) of Definition \ref{lydef} give
	$$
	c[d(u(t),\Omega_\phi)]^2\leq V(u(t))=V(u_0)<c\frac{\rho^2}{4},
	$$
	for all $t\in[0,s^*)$, where in the last inequality we have used the fact that $u_0\in B_\delta(\phi)$ and \eqref{e1}. Thus, we deduce that $d(u(t),\Omega_\phi)<\rho/2$ for all $t\in[0,s^*)$. It is clear that the continuity of $u(t)$ implies the continuity of the function $t\mapsto d(u(t),\Omega_\phi)$. Consequently, $d(u(s^*),\phi)\leq\rho/2$. The continuity of $u(t)$ implies again that $\sup J>s^*$, which is a contradiction. Therefore, $J=[0,\infty)$ and
	$$
	c[d(u(t),\Omega_\phi)]^2\leq V(u(t))=V(u_0)<c\ve^2
	$$
	for all  $t\geq0$. The proof of the theorem is thus completed.
\end{proof}

\begin{remark}
The theory presented here can be extended to study the orbital stability of solitary-wave solutions for \eqref{rDE}. In particular, Theorem \ref{stateo} extends mutatis mutandis when a parametrization, depending on the wave speed, of the solitary waves is not available. For similar results in this direction see \cite{al} (see also \cite{np}).
\end{remark}

\section{Sufficient conditions for  orbital stability}\label{sec4}

In this section, we present sufficient conditions to obtain the key assumption in $(H_2)$, by assuming that $(H_0)$ and $(H_1)$ hold.

\begin{proposition}\label{prop2}
Assume that there is $\Phi\in H_{per}^{m}([0,L])$ such that $\langle\mathcal{L}\Phi,\varphi\rangle=0$, for all $\varphi\in \Upsilon_0=\{u\in H_{per}^{m}([0,L]);\ (Q'(\phi),u)=0\}$, and
\be\label{defiI}
(\mathcal{L}\Phi,\Phi)<0.
\ee
Then, there is a constant $c_7>0$ such that
$$(\mathcal{L}v,v)\geq c_7||v||^2,$$
for all $v\in \Upsilon_0$ such that $( v,\phi')=0$.   
\end{proposition}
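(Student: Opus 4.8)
The plan is to follow the classical Grillakis–Shatah–Strauss style argument for obtaining positivity of a self-adjoint operator on a codimension-one subspace from its behavior on a codimension-two subspace, adapted here to the constraint space $\Upsilon_0$ rather than all of $H^m_{per}$. The operator $\mathcal{L}$ has, by $(H_0)$, exactly one negative eigenvalue (simple) and a simple zero eigenvalue with eigenfunction $\phi'$. The key observation is that $\Phi$, which by hypothesis satisfies $\langle \mathcal{L}\Phi,\varphi\rangle=0$ for all $\varphi\in\Upsilon_0$ together with $(\mathcal{L}\Phi,\Phi)<0$, plays the role of the ``negative direction'' detector inside $\Upsilon_0$. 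First I would record that $\Upsilon_0$ is a closed subspace of codimension one, and that by the hypothesis on $\Phi$ the functional $\varphi\mapsto\langle\mathcal{L}\Phi,\varphi\rangle$ annihilates $\Upsilon_0$; hence $\mathcal{L}\Phi$ is a multiple of $Q'(\phi)=\mu+\nu\phi$ in the appropriate duality sense, a fact I will use to decompose elements of $\Upsilon_0$.

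Next I would set up the decomposition. Given $v\in\Upsilon_0$ with $(v,\phi')=0$, I want to split off the component along $\Phi$: write $v=a\Phi+p$ where $a$ is chosen so that $p$ is ``$\mathcal{L}$-orthogonal'' to $\Phi$ in the sense $(\mathcal{L}\Phi,p)=0$ — this is possible precisely because $(\mathcal{L}\Phi,\Phi)<0\neq0$, so $a=(\mathcal{L}\Phi,v)/(\mathcal{L}\Phi,\Phi)$. Then $(\mathcal{L}v,v)=a^2(\mathcal{L}\Phi,\Phi)+(\mathcal{L}p,p)$, and the cross term vanishes by the choice of $a$ together with self-adjointness. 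The first term is negative, so the whole burden is to show $(\mathcal{L}p,p)$ is positive and controls $\|v\|^2$. For this I need $p$ to avoid both the negative eigenfunction and the kernel of $\mathcal{L}$. The condition $(p,\phi')=0$ should follow from $(v,\phi')=0$ together with $(\Phi,\phi')=0$; the latter I would get from $\langle\mathcal{L}\Phi,\cdot\rangle$ annihilating $\Upsilon_0$ and the relation $\mathcal{L}\phi'=0$, i.e. $0=\langle\mathcal{L}\Phi,\phi'\rangle$ does not immediately give $(\Phi,\phi')=0$, so more care is needed here — in fact one typically argues via $(\mathcal{L}\Phi,\phi')=(\Phi,\mathcal{L}\phi')=0$ being automatic and instead uses a separate normalization, or one absorbs this into the choice of a slightly larger constant. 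The orthogonality of $p$ to the negative eigenfunction $\chi$ is the subtler point: one shows that on the subspace $\{(\mathcal{L}\Phi,\cdot)=0,\ (\cdot,\phi')=0\}$ the quadratic form $(\mathcal{L}\cdot,\cdot)$ is nonnegative, because otherwise there would be a two-dimensional subspace (spanned by $\Phi$ and a negative direction of $p$) on which $\mathcal{L}$ is negative definite, contradicting that $\mathcal{L}$ has only one negative eigenvalue. This is the standard index-counting argument.

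Once nonnegativity $(\mathcal{L}p,p)\ge 0$ is in hand, I would upgrade it to the coercive bound $(\mathcal{L}v,v)\ge c_7\|v\|^2$ on the desired subspace. Combining $(\mathcal{L}v,v)=a^2(\mathcal{L}\Phi,\Phi)+(\mathcal{L}p,p)$ with $(\mathcal{L}p,p)\ge 0$ only gives $(\mathcal{L}v,v)\ge a^2(\mathcal{L}\Phi,\Phi)$, which is the wrong sign; the resolution is that $v$ is additionally constrained to lie in $\Upsilon_0$, which forces the $\Phi$-component to interact with the constraint: when $v\in\Upsilon_0$ we have $(Q'(\phi),v)=0$, and since $\mathcal{L}\Phi$ is (a multiple of) $Q'(\phi)$, this gives $(\mathcal{L}\Phi,v)=0$, hence $a=0$ and $v=p$ outright. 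So the decomposition collapses and $(\mathcal{L}v,v)=(\mathcal{L}p,p)\ge 0$ on the constrained set. To get strict coercivity I would then invoke a standard compactness/contradiction argument: if no such $c_7>0$ existed, take a minimizing sequence $v_n$ with $\|v_n\|=1$, $(\mathcal{L}v_n,v_n)\to 0$; using $(H_1)$ (Gårding-type inequality) the sequence is bounded in $H^{m/2}_{per}$, extract a weakly convergent subsequence with weak limit $v_\infty$, show by weak lower semicontinuity that $(\mathcal{L}v_\infty,v_\infty)\le 0$, hence $=0$, so $v_\infty$ is a minimizer of the constrained quadratic form; a Lagrange-multiplier argument then shows $v_\infty\in\ker\mathcal{L}=\mathrm{span}\{\phi'\}$, but $(v_\infty,\phi')=0$ forces $v_\infty=0$, contradicting $\|v_n\|=1$ once one checks the embedding is compact enough to pass $\|v_n\|\to 1$ through (here $m>0$ gives compactness of $H^{m/2}_{per}\hookrightarrow L^2_{per}$). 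The main obstacle I anticipate is the index-counting step establishing $(\mathcal{L}p,p)\ge 0$ cleanly in the periodic setting — making rigorous that ``$\mathcal{L}$ negative on $\Phi$ plus negative on some direction orthogonal to $\Phi$'' contradicts simplicity of the negative eigenvalue requires the spectral decomposition of $\mathcal{L}$ and a careful dimension count, and the bookkeeping with the two different orthogonality conditions (against $\phi'$ and against $Q'(\phi)$) must be handled consistently.
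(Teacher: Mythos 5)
Your argument is correct, but it takes a genuinely different route from the paper's. The paper works directly with the spectral decomposition $L_{per}^2([0,L])=[\chi]\oplus[\phi']\oplus P$ supplied by $(H_0)$: it writes $\Phi=a_0\chi+b_0\phi'+p_0$ and $\varphi=a_1\chi+p_1$, derives $(\mathcal{L}p_0,p_0)<a_0^2\lambda_0^2$ and $a_0a_1\lambda_0^2=(\mathcal{L}p_0,p_1)$, and concludes $(\mathcal{L}\varphi,\varphi)>0$ via the generalized Cauchy--Schwarz inequality for the positive form on $P$, delegating the upgrade to the coercive bound to \cite[Lemma 5.1]{bss} and \cite[Lemma 7.8]{an}. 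You instead note that every admissible $v$ already satisfies $(\mathcal{L}\Phi,v)=0$ (so your decomposition $v=a\Phi+p$ does collapse to $a=0$), obtain nonnegativity of the form on $\{(\mathcal{L}\Phi,\cdot)=0\}$ by a maximal-negative-subspace count against $n(\mathcal{L})=1$, and then pass to coercivity by a minimizing-sequence argument using $(H_1)$ and the compactness of $H_{per}^{m/2}([0,L])\hookrightarrow L_{per}^2([0,L])$. Both are standard completions of the Bona--Souganidis--Strauss/Weinstein scheme; yours avoids the explicit eigenfunction expansion and is somewhat more robust, while the paper's is purely algebraic and avoids the variational machinery.

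One step you should make explicit: the Euler--Lagrange equation for your limit $v_\infty$ is $\mathcal{L}v_\infty=\alpha Q'(\phi)+\beta\phi'$, not $\mathcal{L}v_\infty=0$. You remove $\beta$ by pairing with $\phi'$ (using $(Q'(\phi),\phi')=0$), and you remove $\alpha$ by pairing with $\Phi$, which requires $(Q'(\phi),\Phi)\neq0$; this holds because otherwise $\Phi\in\Upsilon_0$ and the hypothesis $\langle\mathcal{L}\Phi,\varphi\rangle=0$ on $\Upsilon_0$ would force $(\mathcal{L}\Phi,\Phi)=0$, contradicting \eqref{defiI}. Only after that does $v_\infty\in\ker\mathcal{L}=\mathrm{span}\{\phi'\}$ follow, and $(v_\infty,\phi')=0$ together with $\|v_\infty\|=1$ gives the contradiction. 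This is a routine but genuinely necessary addition to your sketch.
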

\begin{proof}
We shall give only a sketch of the proof. From assumption $(H_0)$ one has
\be\label{decomp}L_{per}^2([0,L])=[\chi]\oplus [\phi']\oplus P,\ee
where $\chi$ satisfies $||\chi||=1$ and $\mathcal{L}\chi=-\lambda_0^2\chi$, $\lambda_0\neq0$.  By using the arguments  in \cite[page 278]{kato}, we obtain that $$( \mathcal{L}p,p)\geq c_8||p||^2,\ \ \ \ \ \mbox{for all}\ p\in H_{per}^m([0,L])\cap P,$$
where $c_8$ is a positive constant.

In view of $(\ref{decomp})$, we write
$$\
\Phi=a_0\chi+b_0\phi'+p_0,\ \ \ \ \ a_0,b_0\in\mathbb{R},
$$
where $p_0\in H_{per}^m([0,L])\cap P$. Now, since $\phi'\in \ker (\mathcal{L})$, $\mathcal{L}\chi=-\lambda_0^2\chi$, and $(\mathcal{L}\Phi,\Phi)<0$, we obtain
\be\label{4.3}
(\mathcal{L} p_0,p_0)=(\mathcal{L}(\Phi-a_0\chi-b_0\phi'),\Phi-a_0\chi-b_0\phi')
=(\mathcal{L}\Phi,\Phi)+a_0^2\lambda_0^2<a_0^2\lambda_0^2.
\ee

Taking $\varphi\in \Upsilon_0$ such that $||\varphi||=1$ and $( \varphi,\phi')=0$, we can write $\varphi=a_1\chi+p_1$, where $p_1\in H_{per}^m([0,L])\cap P$. Thus,
\begin{equation}\label{4.4}
0=(\mathcal{L}\Phi,\varphi)=( -a_0\lambda_0^2\chi +\mathcal{L}p_0,a_1\chi+p_1)
=-a_0a_1\lambda_0^2+(\mathcal{L}p_0,p_1).
\end{equation}
From \eqref{4.3} and \eqref{4.4} it is not difficult to check that $(\mathcal{L}\varphi,\varphi)>0$. All details and the rest of the proof can be found in \cite[Lemma 5.1]{bss}  or in \cite[Lemma 7.8]{an}.
\end{proof}

By combining Theorem \ref{stateo} with Proposition \ref{prop2}, one sees that in order to obtain the orbital stability, under assumptions ($H_0$)-($H_1$), it suffices to obtain an element $\Phi\in H_{per}^{m}([0,L])$ satisfying the conditions in Proposition \ref{prop2}. As an immediate application we have the following.

\begin{corollary}\label{coro12}
	Assume that the Cauchy problem associated to $(\ref{rDE})$ with $f(v)=\frac{v^2}{2}$ is globally well-posed in a convenient Sobolev space $H_{per}^s([0,L])$, $s\geq \frac{m}{2}$. Assume also that $\mathcal{M}$ satisfies \eqref{A1A2} with $\kappa_0=0$.  If hypotheses $(H_0)$ and $(H_1)$ hold, then the periodic wave $\phi$ is orbitally stable provided that $M(\phi)>\omega L$.
\end{corollary}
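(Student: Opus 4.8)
The plan is to apply Proposition \ref{prop2} with a judiciously chosen $\Phi$, so that Theorem \ref{stateo} (via Proposition \ref{prop2}, which supplies $(H_2)$) yields orbital stability. The natural candidate comes from differentiating the profile equation \eqref{ode-wave}. With $f(v)=v^2/2$, equation \eqref{ode-wave} reads $\mathcal{M}\phi+\omega\phi-\frac{\phi^2}{2}+A=0$, and the associated operator is $\mathcal{L}=\mathcal{M}+\omega-\phi$. Applying $\mathcal{L}$ to the constant function $1$ gives $\mathcal{L}1=\theta(0)+\omega-\phi$; since $\kappa_0=0$ in \eqref{A1A2} forces $\theta(0)=0$ (because $0\le\theta(0)\le\upsilon_2|0|^m=0$), we get $\mathcal{L}1=\omega-\phi$. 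On the other hand, differentiating \eqref{ode-wave} in $A$ is not directly available without a parametrization, so instead I would look for $\Phi$ solving $\mathcal{L}\Phi=$ (something explicit). The cleanest choice: note $\mathcal{L}\phi = \mathcal{M}\phi+\omega\phi-\phi^2 = (\mathcal{M}\phi+\omega\phi-\tfrac{\phi^2}{2}) - \tfrac{\phi^2}{2} = -A - \tfrac{\phi^2}{2}$. Combining with $\mathcal{L}1 = \omega - \phi$ suggests that a combination of $\phi$ and $1$ will have an explicit image under $\mathcal{L}$; indeed $\mathcal{L}(\phi + 2\cdot 1)$ type manipulations, or more systematically, one seeks constants so that $\mathcal{L}\Phi$ is proportional to $Q'(\phi)=\mu+\nu\phi$, which is exactly the orthogonality requirement $\langle\mathcal{L}\Phi,\varphi\rangle=0$ for all $\varphi\perp Q'(\phi)$.

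Concretely, I would set $\Phi = a\phi + b$ for constants $a,b$ to be chosen, and with the freedom to pick $\mu,\nu$ in $Q$ as well. Then $\mathcal{L}\Phi = a(-A-\tfrac{\phi^2}{2}) + b(\omega-\phi)$. This is not of the form $\mu+\nu\phi$ because of the $\phi^2$ term, so a pure affine $\Phi$ will not work; one needs $\Phi$ to be a genuine solution of a linear equation $\mathcal{L}\Phi = \mu+\nu\phi$. The point is that such $\Phi\in H^m_{per}$ exists provided $\mu+\nu\phi\perp\ker\mathcal{L}=[\phi']$, which holds automatically since $(\mu+\nu\phi,\phi')=\int_0^L(\mu+\nu\phi)\phi'\,dx=0$ by periodicity. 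So for any choice of $\mu,\nu$ there is $\Phi$ (unique up to $\phi'$) with $\mathcal{L}\Phi=Q'(\phi)$, and then $\langle\mathcal{L}\Phi,\varphi\rangle=(Q'(\phi),\varphi)=0$ for all $\varphi\in\Upsilon_0$, as required. It remains to arrange $(\mathcal{L}\Phi,\Phi)<0$, i.e. $(Q'(\phi),\Phi)<0$. Choosing $\nu=0$, $\mu=1$ (so $Q=M$, $Q'(\phi)=1$), we need $\Phi$ with $\mathcal{L}\Phi=1$ and $(\mathcal{L}\Phi,\Phi)=(1,\Phi)=\int_0^L\Phi\,dx<0$.

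The key computation is then to identify $\int_0^L\Phi\,dx$ where $\mathcal{L}\Phi=1$. Using $\mathcal{L}1=\omega-\phi$ and $\mathcal{L}\phi=-A-\tfrac{\phi^2}{2}$ one can try to express $\Phi$, but the robust route is: integrate the relation $\mathcal{L}\Phi=1$ against suitable test functions. Pairing $\mathcal{L}\Phi=1$ with $1$: $(\mathcal{L}\Phi,1)=(\Phi,\mathcal{L}1)=(\Phi,\omega-\phi)$, and the left side is $\int_0^L1\,dx=L$; hence $\omega\int_0^L\Phi\,dx-\int_0^L\Phi\phi\,dx=L$. Pairing $\mathcal{L}\Phi=1$ with $\phi$: $(\mathcal{L}\Phi,\phi)=(\Phi,\mathcal{L}\phi)=(\Phi,-A-\tfrac{\phi^2}{2})$, while the left side is $\int_0^L\phi\,dx=M(\phi)$. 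These give two linear relations, but they involve the unknown moments $\int\Phi\,dx$, $\int\Phi\phi\,dx$, $\int\Phi\phi^2\,dx$, so more input is needed. The decisive extra identity is obtained by testing \eqref{ode-wave} itself: integrating $\mathcal{M}\phi+\omega\phi-\tfrac{\phi^2}{2}+A=0$ over $[0,L]$ and using $\int_0^L\mathcal{M}\phi\,dx=\widehat{\mathcal{M}\phi}(0)=\theta(0)\widehat\phi(0)=0$ gives $\omega M(\phi)-\tfrac12\int_0^L\phi^2\,dx+AL=0$, i.e. $\int_0^L\phi^2\,dx=2\omega M(\phi)+2AL$. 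Combining these relations one should be able to solve for $\int_0^L\Phi\,dx$ in closed form and show that the sign condition $\int_0^L\Phi\,dx<0$ is equivalent to $M(\phi)>\omega L$.

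The main obstacle I anticipate is precisely the bookkeeping in this last step: one has an underdetermined-looking system of moment identities and must find the right combination of test functions (likely $1$, $\phi$, $\phi^2$, and possibly $\Phi$ itself) together with the profile equation to pin down $\int_0^L\Phi\,dx$ without solving for $\Phi$ explicitly. A clean way is to differentiate the profile equation with respect to the parameter $A$ formally — $\partial_A\phi$ satisfies $\mathcal{L}(\partial_A\phi)=-1$, so $\Phi=-\partial_A\phi$ and $\int_0^L\Phi\,dx=-\partial_A M(\phi)=-M_A(\phi)$, matching the claim $(\mathcal{L}^{-1}1,1)=-M_A(\phi)$ stated in the introduction — but since no parametrization is assumed, this must be replaced by the purely algebraic moment argument above. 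Once $(\mathcal{L}\Phi,\Phi)=\int_0^L\Phi\,dx<0$ is established under $M(\phi)>\omega L$, Proposition \ref{prop2} yields $(H_2)$ (with $Q=M$, so $\mathcal{X}$ there is $\{v:\ (v,\phi')=(v,1)=0\}$), and then Theorem \ref{stateo} gives orbital stability in $H^{m/2}_{per}([0,L])$, completing the proof. One should also double-check that $(H_0)$ and $(H_1)$, assumed in the hypothesis, together with $\kappa_0=0$ guarantee $\mathcal{L}1$ is as computed and that $1\notin\ker\mathcal{L}$ so that $\Phi$ is well defined and $(\mathcal{L}\Phi,\Phi)$ is strictly negative rather than zero.
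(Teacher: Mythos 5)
There is a genuine gap, and it is ironic: you compute the one identity that makes the paper's proof work, $\mathcal{L}1=\omega-\phi$ (using $\theta(0)=0$ when $\kappa_0=0$), and then walk away from it. When you test $\Phi=a\phi+b$ you correctly observe that the $\phi^2$ term obstructs the case $a\neq0$, but you conclude that ``a pure affine $\Phi$ will not work'' --- whereas $a=0$, $b=1$ works perfectly. The paper's proof is exactly this: take $\Phi=1$ and exploit the freedom in $Q$ by choosing $(\mu,\nu)=(\omega,-1)$, so that $Q'(\phi)=\omega-\phi=\mathcal{L}1=\mathcal{L}\Phi$. Then $\langle\mathcal{L}\Phi,\varphi\rangle=(Q'(\phi),\varphi)=0$ for all $\varphi\in\Upsilon_0$ by the very definition of $\Upsilon_0$, and
$$(\mathcal{L}\Phi,\Phi)=\int_0^L(\omega-\phi)\,dx=\omega L-M(\phi)<0$$
is precisely the hypothesis $M(\phi)>\omega L$. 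The point you miss is that $Q$ is to be fitted to $\Phi$, not the other way around.

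The route you actually commit to --- fix $Q=M$, solve $\mathcal{L}\Phi=1$, and try to show $(\mathcal{L}^{-1}1,1)<0$ --- cannot be completed as described, and moreover proves the wrong statement. First, the condition $(\mathcal{L}^{-1}1,1)<0$ (i.e.\ $M_A(\phi)>0$) is the \emph{separate} sufficient condition appearing in Theorem \ref{teostab}(i); it is not equivalent to $M(\phi)>\omega L$, so the claimed equivalence at the end of your ``bookkeeping'' step is false. Second, the bookkeeping itself cannot close: each pairing of $\mathcal{L}\Phi=1$ against a test function introduces a new unknown moment of $\Phi$ ($\int\Phi$, $\int\Phi\phi$, $\int\Phi\phi^2$, \dots), so the system stays underdetermined no matter how many test functions you use --- $(\mathcal{L}^{-1}1,1)$ genuinely depends on the full operator $\mathcal{L}$ and is not determined by finitely many moments of $\phi$. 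The fix is already contained in your own draft: keep $\Phi=1$, let $Q'(\phi)$ absorb $\mathcal{L}1$, and the corollary follows in one line from Proposition \ref{prop2} and Theorem \ref{stateo}.
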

\begin{proof}
	This result is proved by taking $(\mu,\nu)=(\omega,-1)$ in $(\ref{modquant})$ and $\Phi=1$ in Proposition $\ref{prop2}$.
\end{proof}

So far, the obtained results do not depend on any parametrization of the solutions of \eqref{ode-wave}. In particular, the results apply for any periodic solution. When the solutions of \eqref{ode-wave} can be parametrized by the parameters $\omega$ and $A$, the element $\Phi$ can be found, as we will se below, by analyzing a suitable quadratic form. So, in what follows we make the following assumption.\\

\begin{enumerate}
\item[\textit{$(H_3)$}] Suppose that there is an open subset $\mathcal{O}\subset\mathbb{R}^2$ such that $(\omega,A)\in\mathcal{O}\mapsto \phi_{(\omega,A)}\in C_{per}^{\infty}([0,L])$ is a smooth surface of periodic traveling waves with fixed period $L>0$ which solve $(\ref{ode-wave})$. Moreover, we also assume that the spectral assumption in $(H_0)$ remains valid for $\phi:=\phi_{(\omega,A)}$, $(\omega,A)\in \mathcal{O}$.
\end{enumerate}
\bigskip

 Next, having hypothesis $(H_3)$ in mind, we define
$$
\eta:=\frac{\partial}{\partial\omega}\phi_{(\omega,A)},\ \qquad\beta:=\frac{\partial}{\partial A}\phi_{(\omega,A)},
$$
and set
 $$
 M_{\omega}(\phi)=\int_0^{L}\eta dx,\qquad  M_{A}(\phi)=\int_0^{L}\beta dx,
 $$
 and
 $$
 F_{\omega}(\phi)=\frac{1}{2}\int_0^{L}\frac{\partial}{\partial \omega}(\phi_{(\omega,A)}^2)dx, \qquad F_{A}(\phi)=\frac{1}{2}\int_0^{L}\frac{\partial}{\partial{A}}(\phi_{(\omega,A)}^2)dx.
 $$

 We have a simple connection among $\mathcal{L}$ and $M_{\omega}(\phi)$, $F_{\omega}(\phi)$ and $M_A(\phi)$. In fact, differentiating $(\ref{ode-wave})$ with respect to $\omega$ and $A$, we obtain respectively, $\mathcal{L}\eta=-\phi$ and $\mathcal{L}\beta=-1$. Since $1,\phi\in[\phi']^{\bot}$, and $\mathcal{L}:[\phi']^{\bot}\rightarrow[\phi']^{\bot}$ is invertible, we have
\begin{equation}\label{relLMA}
M_{\omega}(\phi)=-(\mathcal{L}^{-1}\phi,1),\ \  M_{A}(\phi)=-(\mathcal{L}^{-1}1,1)\ \  \mbox{and}\ F_{\omega}(\phi)=-(\mathcal{L}^{-1}\phi,\phi).
\end{equation}

 \indent Next result gives us a sufficient condition to obtain $(\ref{defiI})$.

\begin{proposition}\label{propKpos}
Let $\Delta:\R^2\to\R$ be the function defined as
$$
\Delta(x,y)=x^2M_A(\phi)+xy(M_\omega(\phi)+F_A(\phi))+y^2F_\omega(\phi).
$$
Assume that there is $(a,b)\in\R^2$ such that $\Delta(a,b)>0$. Then there is $\Phi\in H_{per}^{m}([0,L])$ such that   $(\mathcal{L}\Phi,\varphi)=0$, for all $\varphi\in \Upsilon_0$, and
$$
( \mathcal{L}\Phi,\Phi)<0.
$$
\end{proposition}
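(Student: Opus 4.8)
The idea is to define $\Phi$ as a linear combination of $\eta$ and $\beta$, namely $\Phi = a\eta + b\beta$ where $(a,b)\in\R^2$ is the point with $\Delta(a,b)>0$, and then compute $(\mathcal{L}\Phi,\Phi)$ in terms of $M_\omega$, $M_A$, $F_\omega$ and $F_A$. First I would recall from the discussion preceding the statement that $\mathcal{L}\eta = -\phi$ and $\mathcal{L}\beta = -1$, so that $\mathcal{L}\Phi = -(a\phi + b)$. Since $Q'(\phi) = \mu + \nu\phi$, the orthogonality condition $(\mathcal{L}\Phi,\varphi)=0$ for all $\varphi\in\Upsilon_0 = \{u\in H_{per}^m([0,L]);\ (Q'(\phi),u)=0\}$ must be checked: one writes $\mathcal{L}\Phi = -(a\phi+b)$ and observes that, for a suitable choice of the constants $\mu,\nu$ in the definition of $Q$, the function $a\phi+b$ is a scalar multiple of $Q'(\phi)=\mu+\nu\phi$ — indeed, taking $(\mu,\nu)=(b,a)$ makes $\mathcal{L}\Phi = -Q'(\phi)$, which is manifestly orthogonal to every element of $\Upsilon_0$. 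This both pins down the (so far unused) parameters $\mu,\nu$ and establishes the first conclusion.

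Next I would compute the quadratic form. Using $\mathcal{L}\Phi = -(a\phi+b)$ and $\Phi = a\eta+b\beta$,
\begin{equation}\label{comp-quad}
(\mathcal{L}\Phi,\Phi) = -\big(a\phi + b,\ a\eta + b\beta\big) = -a^2(\phi,\eta) - ab\big((\phi,\beta)+(1,\eta)\big) - b^2(1,\beta).
\end{equation}
Now I identify each inner product with the derivative quantities: $(1,\eta) = \int_0^L \eta\,dx = M_\omega(\phi)$, $(1,\beta) = \int_0^L\beta\,dx = M_A(\phi)$, $(\phi,\eta) = \frac12\int_0^L\frac{\partial}{\partial\omega}(\phi^2)\,dx = F_\omega(\phi)$, and $(\phi,\beta) = \frac12\int_0^L\frac{\partial}{\partial A}(\phi^2)\,dx = F_A(\phi)$. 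Substituting into \eqref{comp-quad} gives
\begin{equation}\label{comp-quad2}
(\mathcal{L}\Phi,\Phi) = -\Big(a^2 F_\omega(\phi) + ab\big(F_A(\phi) + M_\omega(\phi)\big) + b^2 M_A(\phi)\Big) = -\Delta(b,a),
\end{equation}
so that, possibly after swapping the roles of the two coordinates of the point supplied by the hypothesis (the function $\Delta$ is not symmetric, but $\Delta(x,y)>0$ for some $(x,y)$ is equivalent to $\Delta(y,x)>0$ for some pair after relabeling), the assumption $\Delta(a,b)>0$ yields $(\mathcal{L}\Phi,\Phi)<0$. One should also record that $\Phi\in H_{per}^m([0,L])$, which is immediate because $\eta,\beta\in C_{per}^\infty([0,L])$ by the smoothness of the surface in $(H_3)$.

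\textbf{Main obstacle.} The only genuinely delicate point is the bookkeeping with the parameters $\mu,\nu$ of $Q$ and the matching of the orthogonality condition: one must verify that the single degree of freedom in choosing $(\mu,\nu)$ up to scaling is exactly enough to arrange $\mathcal{L}\Phi \parallel Q'(\phi)$ for the given $(a,b)$, and that this choice is consistent with whatever normalization is implicitly used elsewhere (e.g. the requirement that $\mathcal{L}\Phi\perp\Upsilon_0$ rather than merely $\mathcal{L}\Phi\in[\Upsilon_0]^\perp$ with a nonzero constant, which here is automatic since $[\Upsilon_0]^\perp = \mathrm{span}\{Q'(\phi)\}$). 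A secondary subtlety is the precise correspondence between $\Delta(a,b)$ and the sign of \eqref{comp-quad2}: because $\Delta$ mixes $M_\omega$ and $F_A$ symmetrically in its cross term, the two expressions $\Delta(a,b)$ and $\Delta(b,a)$ coincide, so in fact no relabeling is needed and $(\mathcal{L}\Phi,\Phi) = -\Delta(a,b) < 0$ directly. Once these identifications are in place, the proposition follows, and it hands the element $\Phi$ to Proposition \ref{prop2}, which then delivers hypothesis $(H_2)$ and hence, via Theorem \ref{stateo}, the orbital stability.
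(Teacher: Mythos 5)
Your construction is the same as the paper's: take $\Phi$ to be a linear combination of $\eta$ and $\beta$, use $\mathcal{L}\eta=-\phi$, $\mathcal{L}\beta=-1$ to get the orthogonality to $\Upsilon_0$ (after aligning $(\mu,\nu)$ with the coefficients, which you handle more explicitly than the paper does), and evaluate the quadratic form via $(1,\eta)=M_\omega$, $(1,\beta)=M_A$, $(\phi,\eta)=F_\omega$, $(\phi,\beta)=F_A$. Your computation of $(\mathcal{L}\Phi,\Phi)=-\Delta(b,a)$ for $\Phi=a\eta+b\beta$ is correct.

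The error is in the last paragraph, where you assert that $\Delta(a,b)=\Delta(b,a)$ and hence that no relabeling is needed. This is false: only the cross term of $\Delta$ is symmetric, and
$$
\Delta(a,b)-\Delta(b,a)=(a^2-b^2)\bigl(M_A(\phi)-F_\omega(\phi)\bigr),
$$
which is nonzero in general. Concretely, if $M_A(\phi)=1$, $F_\omega(\phi)=-10$ and $M_\omega(\phi)+F_A(\phi)=0$, then $(a,b)=(1,0)$ gives $\Delta(a,b)=1>0$ while your $\Phi=a\eta+b\beta=\eta$ yields $(\mathcal{L}\Phi,\Phi)=-\Delta(0,1)=10>0$, so the conclusion fails for that $\Phi$. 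The remedy is exactly the relabeling you proposed and then retracted: take $\Phi=b\eta+a\beta$ (equivalently $a\beta+b\eta$, which is the paper's choice), so that $(\mathcal{L}\Phi,\Phi)=-\Delta(a,b)<0$. With that one-line correction, and the corresponding adjustment $(\mu,\nu)=(a,b)$ so that $\mathcal{L}\Phi=-(a+b\phi)=-Q'(\phi)$ is orthogonal to $\Upsilon_0$, your argument is complete and coincides with the paper's proof.
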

\begin{proof}
It suffices to define  $\Phi:=a\beta+b\eta$. Indeed, since $\mathcal{L}\beta=-1$ and $\mathcal{L}\eta=-\phi$, it is clear that  $(\mathcal{L}\Phi,\varphi)=0$, for all $\varphi\in \Upsilon_0$, and
\[
\begin{split}
 (\mathcal{L}\Phi,\Phi)&=(-a-b\phi,a\beta+b\eta)\\
 &=-(a^2M_A(\phi)+abM_\omega(\phi)+abF_A(\phi)+b^2F_\omega(\phi))\\
 &=-\Delta(a,b).
\end{split}
\]
The proof is thus completed.
\end{proof}

Combining assumptions $(H_1)$ and $(H_3)$ with the result in Propositions $\ref{prop2}$ and $\ref{propKpos}$, we are able to establish the following stability result.

\begin{theorem}\label{teostab}
Assume that the Cauchy problem associated with $(\ref{rDE})$ is globally well-posed in a convenient Sobolev space $H_{per}^s([0,L])$, $s\geq \frac{m}{2}$. If the assumptions $(H_1)$ and $(H_3)$ are valid, the periodic wave $\phi$ is orbitally stable provided that there is $(a,b)\in\mathbb{R}^2$ such that $\Delta(a,b)>0$. In particular, the stability result occurs if at least one of the following statements hold:
\begin{itemize}
	\item[(i)]  $M_A(\phi)=-(\mathcal{L}^{-1}1,1)>0$,
	\item[(ii)] $F_{\omega}(\phi)=-(\mathcal{L}^{-1}\phi,\phi)>0,$
	\item[(iii)] $M_{\omega}(\phi)^2-F_{\omega}(\phi)M_A(\phi)=(\mathcal{L}^{-1}\phi,1)^2-(\mathcal{L}^{-1}\phi,\phi)(\mathcal{L}^{-1}1,1)>0$.
	\end{itemize}
\end{theorem}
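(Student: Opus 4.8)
The plan is to chain together the machinery already developed. First I would observe that the general stability conclusion follows immediately from the previous results: assuming $(H_1)$ and $(H_3)$, if there is $(a,b)\in\mathbb{R}^2$ with $\Delta(a,b)>0$, then Proposition \ref{propKpos} produces an element $\Phi=a\beta+b\eta\in H^m_{per}([0,L])$ satisfying $(\mathcal{L}\Phi,\varphi)=0$ for all $\varphi\in\Upsilon_0$ and $(\mathcal{L}\Phi,\Phi)<0$. Feeding this $\Phi$ into Proposition \ref{prop2} yields a constant $c_7>0$ with $(\mathcal{L}v,v)\geq c_7\|v\|^2$ for all $v\in\Upsilon_0$ with $(v,\phi')=0$, which is precisely hypothesis $(H_2)$ (recall $\Upsilon_0=\{u\in H^m_{per}:(Q'(\phi),u)=0\}$ and $Q'(\phi)=\mu+\nu\phi$, so $\Upsilon_0$ is the constraint set defining $\mathcal{X}$ at the $H^m$ level). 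With $(H_0)$ coming from the spectral part of $(H_3)$ and $(H_1)$, $(H_2)$ now in hand, Theorem \ref{stateo} gives the orbital stability of $\phi$ in $H^{\frac{m}{2}}_{per}([0,L])$.

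It then remains to check that each of the three listed conditions (i), (ii), (iii) forces the existence of a pair $(a,b)$ with $\Delta(a,b)>0$, where $\Delta(x,y)=x^2M_A(\phi)+xy(M_\omega(\phi)+F_A(\phi))+y^2F_\omega(\phi)$. For (i), take $(a,b)=(1,0)$, so $\Delta(1,0)=M_A(\phi)>0$; for (ii), take $(a,b)=(0,1)$, so $\Delta(0,1)=F_\omega(\phi)>0$. For (iii), I would argue that $\Delta$, viewed as a binary quadratic form, takes a positive value as soon as it is not negative semidefinite; its ``discriminant'' is $(M_\omega(\phi)+F_A(\phi))^2-4M_A(\phi)F_\omega(\phi)$, and one checks (using $F_A(\phi)=M_\omega(\phi)$, see below) that this equals $4\big(M_\omega(\phi)^2-M_A(\phi)F_\omega(\phi)\big)>0$ under (iii); a positive discriminant means the form is indefinite and hence assumes positive values, so a suitable $(a,b)$ exists.

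One small but genuine point that must be addressed is the symmetry $F_A(\phi)=M_\omega(\phi)$, i.e.\ $(\mathcal{L}^{-1}\phi,1)=(\mathcal{L}^{-1}1,1)$ — wait, more precisely $F_A(\phi)=\frac12\int_0^L \partial_A(\phi^2)=\int_0^L\phi\,\beta\,dx=(\phi,\beta)=(\phi,-\mathcal{L}^{-1}1)=-(\mathcal{L}^{-1}\phi,1)=M_\omega(\phi)$, using self-adjointness of $\mathcal{L}^{-1}$ on $[\phi']^\perp$ and $\mathcal{L}\beta=-1$, $\mathcal{L}\eta=-\phi$. This identity lets me rewrite $\Delta(x,y)=x^2M_A(\phi)+2xy\,M_\omega(\phi)+y^2F_\omega(\phi)$, so its matrix is exactly the symmetric matrix with entries $-(\mathcal{L}^{-1}1,1)$, $-(\mathcal{L}^{-1}\phi,1)$, $-(\mathcal{L}^{-1}\phi,\phi)$ (up to sign), which is why (iii) reads as a determinant condition. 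I would also record the translation via \eqref{relLMA}: $M_A(\phi)=-(\mathcal{L}^{-1}1,1)$, $F_\omega(\phi)=-(\mathcal{L}^{-1}\phi,\phi)$, $M_\omega(\phi)=-(\mathcal{L}^{-1}\phi,1)$, so the three bullets are restatements in spectral language.

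The main obstacle is not any of these steps individually — each is short — but rather making sure the constraint spaces match up cleanly: Proposition \ref{prop2} is stated on $\Upsilon_0\subset H^m_{per}$ with the $L^2$-pairing $(Q'(\phi),u)=0$, whereas $(H_2)$ is also stated at the $H^m$ level with the same pairing, so no density or regularity gymnastics are needed there; the only care is to confirm $\Phi=a\beta+b\eta\in H^m_{per}$ (true since $\phi\in C^\infty_{per}$ and $\eta,\beta$ solve elliptic equations $\mathcal{L}\eta=-\phi$, $\mathcal{L}\beta=-1$ with smooth right-hand sides, hence are smooth) and that $(\mathcal{L}\Phi,\varphi)=\langle a(-1)+b(-\phi),\varphi\rangle=-a(1,\varphi)-b(\phi,\varphi)=-(a\mu^{-1}+\dots)$ — more simply, $-(1,\varphi)$ and $-(\phi,\varphi)$ are both multiples of $(Q'(\phi),\varphi)=0$ precisely when $Q'(\phi)=\mu+\nu\phi$ spans the same direction, which is the role of choosing $\mu,\nu$; so one must note that $1$ and $\phi$ together annihilate $\varphi\in\Upsilon_0$ only after using that $\Upsilon_0$ is the orthogonal complement of $Q'(\phi)=\mu+\nu\phi$, and here one needs $\Phi$ to pair to zero with every such $\varphi$, which holds because $\mathcal{L}\Phi=-a-b\phi$ is a linear combination of $1$ and $\phi$ and $\Upsilon_0$ is defined by orthogonality to one such combination — hence one should restrict attention to $(a,b)$ proportional to $(\mu,\nu)$, or equivalently note Proposition \ref{prop2} only needs $(\mathcal{L}\Phi,\varphi)=0$ for $\varphi\in\Upsilon_0$, which forces $\mathcal{L}\Phi\in\mathrm{span}\{Q'(\phi)\}=\mathrm{span}\{\mu+\nu\phi\}$. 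I would therefore, in the write-up, either take $(a,b)=(\mu,\nu)$ when verifying (i)–(iii) reduces to $\Delta(\mu,\nu)>0$, or simply invoke Proposition \ref{propKpos} as a black box which already incorporates this bookkeeping, and spend the remaining lines only on the elementary quadratic-form argument for (iii).
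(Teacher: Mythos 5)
Your proposal is correct and follows essentially the same route as the paper: chain Proposition \ref{propKpos} into Proposition \ref{prop2} to obtain $(H_2)$ and then invoke Theorem \ref{stateo}, verify (i) and (ii) with $(a,b)=(1,0)$ and $(0,1)$, and for (iii) use $F_A(\phi)=M_\omega(\phi)$ to conclude that $\Delta$ is an indefinite quadratic form and hence takes positive values. The only cosmetic differences are that you derive the symmetry $F_A(\phi)=M_\omega(\phi)$ from the self-adjointness of $\mathcal{L}^{-1}$ via \eqref{relLMA} (the paper instead cross-differentiates \eqref{ode-wave} in $\omega$ and $A$ and compares), and you make explicit the bookkeeping point, left implicit in the paper, that $(\mu,\nu)$ must be taken proportional to $(a,b)$ so that $\mathcal{L}\Phi=-a-b\phi$ annihilates $\Upsilon_0$.
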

\begin{proof}
The first part of the theorem is clear. Parts (i) and (ii) are obtained by considering in Proposition $\ref{propKpos}$, $(a,b)=(1,0)$ and $(a,b)=(0,1)$, respectively. To obtain (iii), we need to derive equation $(\ref{ode-wave})$ with respect to $\omega$, multiply the result by $\phi$ and then integrate the final result over $[0,L]$. With these arguments in hand, we have from the self-adjointness of $\mathcal{M}$ and $(\ref{ode-wave})$,
\begin{equation}\label{omegawave}
-AM_{\omega}(\phi)+\int_0^L f(\phi)\eta dx-\int_0^Lf'(\phi)\phi\eta dx+\int_0^L\phi^2 dx=0.
\end{equation}
Similarly, if we derive  $(\ref{ode-wave})$ with respect to $A$, we conclude that
\begin{equation}\label{Awave}
-AM_{A}(\phi)+\int_0^L f(\phi)\beta dx-\int_0^Lf'(\phi)\phi\beta dx+\int_0^L\phi dx=0.
\end{equation}
Thus, deriving $(\ref{omegawave})$ with respect to $A$, $(\ref{Awave})$ with respect to $\omega$, and  comparing the obtained results, we get
\begin{equation}\label{FAMomega}
F_A(\phi)=M_{\omega}(\phi).
\end{equation}
Hence,
$$
\Delta(x,y)=x^2M_A(\phi)+2xyM_\omega(\phi)+y^2F_\omega(\phi)=(x,y)\,S\,(x,y)^T.
$$
where $S$ is the symmetric matrix
$$
S:=\left[\begin{array}{llll} F_{\omega}(\phi)\ \ M_{\omega}(\phi)\\
M_{\omega}(\phi)\ \ M_{A}(\phi)\end{array}\right].
$$ 
Since $\det(S)=-\big(M_{\omega}(\phi)^2-F_{\omega}(\phi)M_A(\phi)\big)<0$ it follows that $S$ has two real eigenvalues with opposite sign, which implies that the quadratic form $\Delta$ is indefinite. Consequently, there is $(a,b)$ such that $\Delta(a,b)>0$.

\end{proof}

\begin{remark}\label{det}
The case $M_{\omega}(\phi)^2-F_{\omega}(\phi)M_{A}(\phi)< 0$ deserves to be highlighted. In such a situation, both  $M_{A}(\phi)$ and $F_{\omega}(\phi)$ are nonzero and have the same sign. For one hand, if both are positive, we have the orbital stability from Theorem $\ref{teostab}$. On the other hand, if  both are negative, we can use the arguments in \cite[Theorem 1]{DK} to conclude that 
\begin{equation}
\label{Kham}K_{\rm{Ham}}=n(\mathcal{L})-n((\mathcal{L}^{-1}1,1))-n(\mathcal{D})
\end{equation}
where $\mathcal{D}$ is given in \eqref{Dmatrix}, $n(\mathcal{L})$ indicates the number of negative eigenvalues (counting multiplicities) of $\mathcal{L}$, and
$$
n(s)=\left\{\begin{array}{llll}
1,\ \ \ \mbox{\rm{if}}\ \ s>0,\\
0,\ \ \ \mbox{\rm{if}}\ \ s<0.
\end{array}\right.
$$
From \eqref{relLMA} we have
$$
\mathcal{D}=\frac{1}{M_A(\phi)}\left(M_{\omega}(\phi)^2-F_{\omega}(\phi)M_{A}(\phi)\right)>0.
$$
Thus  $K_{\rm{Ham}}=n(\mathcal{L})-n(-M_A(\phi))-n(\mathcal{D})=1$. Due to the Hamiltonian eigenvalue symmetry, that is, if $\lambda$ is an eigenvalue so are $-\lambda$ and $\pm\overline{\lambda}$, it must be the case that $k_c$ and $k_i^{-}$ are even numbers. As a consequence, $K_{\rm{Ham}}=k_r=1$, which means that the periodic wave $\phi$  is spectrally unstable.
\end{remark}

\begin{remark}
Recall that in \cite{BJK1}, \cite{BJK2}, and \cite{DK} the authors obtained their stability results under the assumption that $(\mathcal{L}^{-1}1,1)\neq0$. It should be pointed out that, in Theorem \ref{teostab} we can obtain the orbital stability without such an assumption. 
Thus, Theorem \ref{teostab} can be seen as an improvement of those works.
\end{remark}

In many practical situations, the parameters $\omega$ and $A$ in \eqref{ode-wave} are not independent. Instead, both are dependent of a third parameter, say, $\xi$ with $\xi$ belonging to some open interval. So, in this situation, instead of having a smooth surface as in ($H_3$), we have a smooth curve of periodic waves. Our conditions are still sufficient to obtain the orbital stability.

\begin{corollary}\label{coro2}
Assume that the Cauchy problem associated with $(\ref{rDE})$ is globally well-posed in a convenient Sobolev space $H_{per}^s([0,L])$, $s\geq \frac{m}{2}$. Suppose that $(H_1)$ and $(H_3)$ are valid with the difference that $\omega$ and $A$ depend smoothly on $\xi$. If $\Phi=\frac{\partial}{\partial \xi}\phi_{(\omega(\xi),A(\xi))}$, the periodic wave $\phi=\phi_{(\omega(\xi),A(\xi))}$ is orbitally stable in $H_{per}^{\frac{m}{2}}([0,L])$ provided that 
\begin{equation}\label{solcriterio}
(\mathcal{L}\Phi,\Phi)=-\frac{d A}{d \xi} \frac{d}{d\xi}M(\phi)-\frac{d \omega}{d \xi}\frac{d}{d\xi}F(\phi)<0.
\end{equation}
\end{corollary}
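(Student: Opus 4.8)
The plan is to reduce Corollary \ref{coro2} to the situation already handled by Proposition \ref{prop2} together with Theorem \ref{stateo}, with the only work being the identification of the quantity $(\mathcal{L}\Phi,\Phi)$ with the right-hand side of \eqref{solcriterio}. First I would observe that, since $(\omega,A)$ depend smoothly on $\xi$, the map $\xi\mapsto\phi_{(\omega(\xi),A(\xi))}$ is a smooth curve of solutions of \eqref{ode-wave}, and by the chain rule
$$
\Phi=\frac{\partial}{\partial\xi}\phi_{(\omega(\xi),A(\xi))}=\frac{d\omega}{d\xi}\,\eta+\frac{dA}{d\xi}\,\beta,
$$
where $\eta=\partial_\omega\phi$ and $\beta=\partial_A\phi$ as in the text. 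Because $\mathcal{L}\eta=-\phi$ and $\mathcal{L}\beta=-1$ (obtained by differentiating \eqref{ode-wave}), we get $\mathcal{L}\Phi=-\frac{d\omega}{d\xi}\phi-\frac{dA}{d\xi}$, which lies in the span of $\{1,\phi\}$; since every $\varphi\in\Upsilon_0$ satisfies $(Q'(\phi),\varphi)=0$ and $Q'(\phi)=\mu+\nu\phi$ is a linear combination of $1$ and $\phi$, one checks directly that $(\mathcal{L}\Phi,\varphi)=-\frac{d\omega}{d\xi}(\phi,\varphi)-\frac{dA}{d\xi}(1,\varphi)$ is a multiple of $(Q'(\phi),\varphi)=0$ — here one uses that $(\mu,\nu)$ may be chosen so that $\{1,\phi\}$ and $\{1,Q'(\phi)\}$ span the same two-dimensional space, exactly as in the proof of Proposition \ref{propKpos}. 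Hence $\Phi$ satisfies the first hypothesis of Proposition \ref{prop2}.

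Next I would compute the pairing. Using $\mathcal{L}\Phi=-\frac{d\omega}{d\xi}\phi-\frac{dA}{d\xi}$ and $\Phi=\frac{d\omega}{d\xi}\eta+\frac{dA}{d\xi}\beta$, bilinearity gives
$$
(\mathcal{L}\Phi,\Phi)=-\Big(\tfrac{d\omega}{d\xi}\Big)^2(\phi,\eta)-\tfrac{d\omega}{d\xi}\tfrac{dA}{d\xi}\big((\phi,\beta)+(1,\eta)\big)-\Big(\tfrac{dA}{d\xi}\Big)^2(1,\beta).
$$
Now I identify each inner product with a derivative of a conserved quantity along the curve: $(1,\beta)=M_A(\phi)$, $(1,\eta)=M_\omega(\phi)$, $(\phi,\eta)=F_\omega(\phi)$, and $(\phi,\beta)=F_A(\phi)$ in the notation preceding Proposition \ref{propKpos}; moreover, along the curve $\xi\mapsto\phi$,
$$
\frac{d}{d\xi}M(\phi)=(1,\Phi)=\frac{d\omega}{d\xi}M_\omega(\phi)+\frac{dA}{d\xi}M_A(\phi),\qquad
\frac{d}{d\xi}F(\phi)=(\phi,\Phi)=\frac{d\omega}{d\xi}F_\omega(\phi)+\frac{dA}{d\xi}F_A(\phi).
$$
Combining these two displays and using the identity $F_A(\phi)=M_\omega(\phi)$ established in \eqref{FAMomega}, one obtains precisely
$$
(\mathcal{L}\Phi,\Phi)=-\frac{dA}{d\xi}\frac{d}{d\xi}M(\phi)-\frac{d\omega}{d\xi}\frac{d}{d\xi}F(\phi),
$$
which is the right-hand side of \eqref{solcriterio}.

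Finally, assuming \eqref{solcriterio}, we have $(\mathcal{L}\Phi,\Phi)<0$, so both hypotheses of Proposition \ref{prop2} hold; that proposition then yields a constant $c_7>0$ with $(\mathcal{L}v,v)\geq c_7\|v\|^2$ for all $v\in\Upsilon_0$ with $(v,\phi')=0$, which is exactly assumption $(H_2)$ with the appropriate choice of $Q$ (note $\Upsilon_0=\{u:(Q'(\phi),u)=0\}$ since $Q'(\phi)=\mu+\nu\phi$). Since $(H_1)$ holds by hypothesis and $(H_0)$ is contained in $(H_3)$, Theorem \ref{stateo} applies and gives the orbital stability of $\phi$ in $H_{per}^{\frac{m}{2}}([0,L])$. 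The only genuinely delicate point is the bookkeeping in the first paragraph: one must verify that the spectral setup $(H_0)$ along the curve guarantees $\mathcal{L}$ is invertible on $[\phi']^\perp$ so that $\eta,\beta$ are well defined as $\mathcal{L}^{-1}(-\phi),\mathcal{L}^{-1}(-1)$, and that the admissible choice of $(\mu,\nu)$ makes $\Upsilon_0$ coincide with the orthogonal complement of $Q'(\phi)$ used in Proposition \ref{prop2}; both are routine given $(H_3)$, so I do not expect a real obstacle, only care with constants.
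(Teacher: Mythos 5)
Your proposal is correct and is essentially the paper's own (one-line) proof: the paper simply takes $\mu=\frac{dA}{d\xi}$ and $\nu=\frac{d\omega}{d\xi}$ in \eqref{modquant} and invokes Proposition \ref{prop2}, and your computation of $(\mathcal{L}\Phi,\Phi)$ via $\Phi=\frac{d\omega}{d\xi}\eta+\frac{dA}{d\xi}\beta$ just makes explicit the identity \eqref{solcriterio} that the paper leaves unproved (note you do not actually need \eqref{FAMomega} for it; the cross terms already match). One small correction to your first paragraph: since $\Upsilon_0$ has codimension one, the condition guaranteeing $(\mathcal{L}\Phi,\varphi)=0$ for all $\varphi\in\Upsilon_0$ is not that $\{1,\phi\}$ and $\{1,Q'(\phi)\}$ span the same plane, but that $Q'(\phi)=\mu+\nu\phi$ be \emph{proportional} to $\mathcal{L}\Phi=-\frac{dA}{d\xi}-\frac{d\omega}{d\xi}\phi$, which forces exactly the paper's choice $(\mu,\nu)=\bigl(\frac{dA}{d\xi},\frac{d\omega}{d\xi}\bigr)$ up to a nonzero scalar.
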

\begin{proof}
Taking $\mu=\frac{\partial A}{\partial \xi}$ and $\nu=\frac{\partial \omega}{\partial \xi}$ in $(\ref{modquant})$, the result follows by applying Proposition $\ref{prop2}$.
\end{proof}

\begin{remark}
Note that \eqref{solcriterio} is a generalization of the well known criterion for the orbital stability of solitary-wave solutions for equations of the form \eqref{rDE} (under suitable spectral conditions as in ($H_0$)). Indeed, for solitary waves, it is clear that $A=0$ and $\omega=\xi$. Thus, \eqref{solcriterio} immediately reduces to 
$$
\dfrac{d}{d\omega}\int \phi^2dx>0.
$$
The interested reader will find all details, for instance, in \cite{bss}.
\end{remark}

\subsection{Applications} In order to illustrate our results, we will present several applications taking into account different scenarios.

\subsubsection{The KdV equation}
Let us start our applications with a very simple example. By assuming $\mathcal{M}=-\partial_x^2$ and $f(v)=\frac{v^2}{2}$, \eqref{rDE} reduce to the well known Korteweg-de Vries equation,
\begin{equation}\label{kdv1}
u_t+uu_x+u_{xxx}=0.
\end{equation}
An explicit family of periodic traveling waves of \eqref{kdv1} is well known. For instance, in \cite{ABS}, the authors presented periodic waves with the zero mean property given by
\begin{equation}\label{cnoidalwaves}
\phi(x)=\beta\left({\rm{dn}}^2\left(\frac{2K(k)}{L}x,k\right)-\frac{E(k)}{K(k)}\right),
\end{equation}
where $L>0$ is fixed and $\beta$ depends smoothly on the wave speed $\omega>0$.  Here, dn stands for the dnoidal elliptic function,  $K$ and $E$ indicate the complete elliptic integrals of first and second kind, respectively, and both of them depend on the elliptic modulus $k\in(0,1)$. By recalling that ${\rm dn^2}=1-k^2{\rm cn^2}$, where cn is the cnoidal elliptic function, these solutions are indeed the cnoidal solutions studied by Benjamin in \cite{benjamin}.

In this case, the constant $A$ is given by $A=\frac{1}{2L}\int_0^L\phi(x)^2dx$ and so, it depends smoothly on $\omega$ as well. Assumption $(H_1)$ is easily obtained by using the arguments in Remark $\ref{garding}$. Here, the linearized operator reads as $\mathcal{L}=-\partial_x^2+\omega-\phi$. To obtain the spectral properties as in hypothesis $(H_0)$ it is necessary to use the classical Floquet theory combined with the spectral theory associated with the Lam\'e equation (see \cite[Section 5]{ABS} for details). According to Corollary $\ref{coro2}$ let us consider $\xi=\omega$ and $\Phi=\frac{\partial}{\partial \omega}\phi$. The arguments in \cite[Theorem 5.2]{ABS} established that $\frac{\partial }{\partial \omega}\int_0^L\phi(x)^2dx>0$, for all $\omega>0$. Therefore, since $\phi$ has zero mean, from \eqref{solcriterio}, one has 
$$
(\mathcal{L}\Phi,\Phi)=-\frac{d }{d \omega}\int_0^L\phi^2dx<0,\ \ \ \ \omega>0.
$$
 Thus, the periodic cnoidal wave in $(\ref{cnoidalwaves})$ is orbitally stable in $H_{per}^1([0,L])$.

\subsubsection{The generalized KdV equation}

If $\mathcal{M}=-\partial_x^2$ and $f(v)=\frac{v^{p+1}}{p+1}$, the generalized Korteweg-de Vries equation,
 \begin{equation}\label{mkdv1}
 u_t+u^{p}u_x+u_{xxx}=0,
 \end{equation}
 emerges. The periodic wave solutions must satisfy
\begin{equation}\label{ode-wavekdv}
-\phi''+\omega\phi-\frac{1}{p+1}\phi^{p+1}+A=0.
\end{equation}
By multiplying \eqref{ode-wavekdv} by $\phi$ and integrating once, it can be written in the quadrature form
\begin{equation}\label{quadrakdv}
-\phi'^2+\omega\phi^2-\frac{2}{(p+1)(p+2)}\phi^{p+2}+2A\phi+2B=0,
\end{equation}
with $B$ appearing as another constant of integration. Thus, the periodic solutions of \eqref{quadrakdv} can be smoothly parametrized by the triple $(A,B,\omega)$, that is, $\phi=\phi(\cdot; A,B,\omega)$ (see the details in \cite{johnson}).

In this specific case, the period $L$ is a real function which also depends on the parameters $(A,B,\omega)$ and therefore, we can not directly use the results contained in Proposition $\ref{propKpos}$ and Theorem $\ref{teostab}$. However, recall that the arguments in Section 3 do not depend on any parametrization of the solutions of \eqref{ode-wavekdv}. Thus, if we assume that assumption $(H_0)$ holds\footnote{In \cite{johnson}, it is proved that $L_{B}>0$ is sufficient to obtain the spectral properties.} (from Remark $\ref{garding}$, we see that assumption $(H_1)$ is easily verified, at least for $\omega>0$), we only need to prove the existence of an element $\Phi$  satisfying the conditions in Proposition \ref{defiI}.

 In \cite[page 1935]{johnson}, the author has defined the  periodic function $\Phi$ by
\begin{equation}\label{Phimat}
\Phi=\left|\begin{array}{llllllllll}
\phi_A & & L_A & & M_A(\phi)\\
\phi_B & & L_B & & M_B(\phi)\\
\phi_{\omega} & & L_{\omega} & & M_{\omega}(\phi)
\end{array}\right|,
\end{equation}
where $\phi_A=\frac{\partial}{\partial A}\phi$, $\phi_B=\frac{\partial}{\partial B}\phi$, etc. In addition, a straightforward calculation gives  that $\mathcal{L}\Phi$ can be expressed in terms of convenient Jacobian determinants and the periodic wave $\phi$ as 
\begin{equation}\label{JacobL}
\mathcal{L}\Phi=-\{L,M\}_{B,\omega}-\{L,M\}_{A,B}\phi,
\end{equation}
where
$$
\{L,M\}_{B,\omega}=\left|\begin{array}{cc}
 L_B &  M_B(\phi)\\
 L_\omega &  M_\omega(\phi)
\end{array}\right|
\qquad \mbox{and} \qquad
\{L,M\}_{A,B}=\left|\begin{array}{cc}
L_A &  M_A(\phi)\\
L_B &  M_B(\phi)
\end{array}\right|.
$$
Thus, by taking $\mu=\{L,M\}_{B,\omega}$ and $\nu=\{L,M\}_{A,B}$ in \eqref{modquant}, it is easy to see that $(\mathcal{L}\Phi,\varphi)=0$, for all $\varphi\in \Upsilon_0$. In addition, 
\begin{equation}\label{LPhiPhi}
(\mathcal{L}\Phi,\Phi)=-\{L,M\}_{A,B}\{L,M,F\}_{A,B,\omega}.
\end{equation}
As a consequence of Proposition \ref{prop2} and Theorem \ref{stateo}, $\phi$ is orbitally stable in $H^1_{per}([0,L])$
provided that $\{L,M\}_{A,B}\{L,M,F\}_{A,B,\omega}>0$. Hence, under assumption ($H_0$) we recover the orbital stability results in \cite[Lemma 4.1]{johnson}. It is to be noted that, according to Remark 9 in \cite{johnson}, only the positivity of the product on the right-hand side of \eqref{LPhiPhi} is needed. The sign of each determinant does not play any role for the orbital stability.\\

\subsubsection{The Intermediate Long Wave (ILW) equation.}
  Now, we present a simple way to prove the orbital stability of periodic waves with the mean zero property for the Intermediate
Long Wave equation,
\begin{equation}\label{ILW}
u_t+2uu_x+\delta^{-1}u_x-(\mathcal{T_\delta}u)_{xx}=0,\ \ \ \ \ \ \delta>0.
\end{equation}
The linear operator
$\mathcal{T_\delta}$ is defined by
$$
\mathcal{T_\delta}u(x)=\frac{1}{L} \text{p.v.} \int_{-L/2}^{L/2}
\Gamma_{\delta, L}(x-y) u(y)dy,
$$
where p.v. stands for  the Cauchy principal value of the integral and
$$
\Gamma_{\delta, L}(\xi)=-i\sum_{n\neq 0} \coth\left(\frac{2\pi
n\delta}{L}\right)e^{2in \pi \xi/L}.
$$

It is well known that if parameter $\delta$ goes to infinity, equation in $(\ref{ILW})$ converges, at least formally, to the Benjamin-Ono equation
 \begin{equation}\label{BO}
u_t+2uu_x-\mathcal{H}u_{xx}=0,
\end{equation}
with  $\mathcal{H}$ denoting the periodic Hilbert transform and  defined for $L$-periodic functions  as
\begin{equation}\label{symbBO}
\mathcal{H}f(x)=\frac{1}{L} \text{p.v.} \int_{-L/2}^{L/2}
{\rm cot}\Big[\frac{\pi(x-y)}{L}\Big]f(y)dy,
\end{equation}
 On the other hand, if $\delta$ goes to zero, then equation in $(\ref{ILW})$ converges formally to the corresponding KdV equation.

\indent The orbital stability of periodic waves related to the Benjamin-Ono equation was determined in \cite{natali} (but we can apply the method presented in this paper to give a simpler proof).

\indent By looking for periodic waves of the form $u(x,t)=\phi(x-\omega t)$, $\omega>0$, we see that $\phi$ must satisfy the non-local equation
\begin{equation}\label{travkdv}
\omega\phi-\phi^2+\mathcal{M}\phi+A=0, \end{equation}
 where $A$  is the integration constant given by $A=\frac{1}{L}\int_0^L\phi(x)^2dx$ and $\mathcal{M}=\mathcal{T_\delta}\partial_x-\frac{1}{\delta}$. The symbol of $\mathcal{M}$ is given by
 $$
 \theta(\kappa)=\frac{2\pi \kappa}{L}\coth\left( \frac{2\pi \kappa \delta}{L}\right)-\frac{1}{\delta}, \qquad \kappa\in\mathbb{Z}.
 $$
 Note that by using the relation (see \cite[Lemma 4.1]{absa})
 \begin{equation}\label{abdel}
-\frac{1}{\delta}+2\pi|y|\leq 2\pi\coth(2\pi\delta y)\leq \frac{1}{\delta}+2\pi|y|, \quad \delta>0, y\in\R,
 \end{equation}
we can take $\upsilon_2=2\pi/L$ in \eqref{A1A2}. Also, choose $\kappa_0$ sufficiently large so that $\delta>L/(\kappa_0\pi)$. This implies that 
$$
\vartheta_0:=\frac{2\pi}{L}-\frac{2}{\kappa_0 \delta}>0.
$$
By choosing any $\upsilon_1>0$ such that $\upsilon_1<\vartheta_0$, \eqref{abdel} implies that, for $|\kappa|\geq \kappa_0$,
$$
\upsilon_1|\kappa|\leq -\frac{2}{\delta}+\frac{2\pi|\kappa|}{\delta}\leq \frac{2\pi \kappa}{L}\coth\left( \frac{2\pi \kappa \delta}{L}\right)-\frac{1}{\delta}=\theta(\kappa).
$$
Thus \eqref{A1A2} holds with $m=1$ and the natural space to study \eqref{ILW} is then $H^{\frac{1}{2}}_{per}([0,L])$.

 The smooth $L$-periodic  solution with zero mean is given explicitly by
 \begin{equation}\label{solilw}
\phi(x)
=\displaystyle\frac{2K(k)i}{L}\displaystyle\left[Z\displaystyle
\left(\displaystyle\frac{2K(k)}{L}(x-i\delta);k\right)-Z\displaystyle
\left(\displaystyle\frac{2K(k)}{L}(x+i\delta);k\right)\right],\ \
\end{equation}
where $Z$ is the Jacobi Zeta function. In \cite{natali1} it was shown that the linearized operator $\mathcal{L}=\mathcal{M}+\omega-2\phi$ fulfills the spectral property required in assumption $(H_0)$. In addition, for a fixed $\delta>0$, it was numerically determined  that $\frac{\partial }{\partial\omega}\int_0^L\phi(x)^2dx>0$, for all $\omega>0$. Now, if one sets $\xi=\omega$ and $\Phi=\frac{\partial}{\partial \omega}\phi$ in Corollary $\ref{coro2}$, we obtain that 
$$(\mathcal{L}\Phi,\Phi)=\left(-\frac{\partial}{\partial\omega}A-\phi,\frac{\partial }{\partial\omega}\phi\right)=-\frac{1}{2}\frac{\partial}{\partial\omega}\int_0^L\phi(x)^2dx<0.$$
Thus, the periodic wave $\phi$ is orbitally stable in $H^{\frac{1}{2}}_{per}([0,L])$.

\section{Extensions to regularized equations}\label{sec5}

\indent The arguments  in the previous sections can also be used to determine sufficient conditions for the orbital stability of periodic waves related to the regularized equation
\begin{equation}\label{regDE}
u_t+u_x+(f(u))_x+(\mathcal{M}u)_t=0,
\end{equation}
where $f$ and $\mathcal{M}$ are as before. Also in this case, periodic traveling waves of $(\ref{regDE})$ are solutions of the form $u(x,t)=\phi(x-\omega t)$. Again, if we substitute this kind of special solution into $(\ref{regDE})$ one has, after integration,
\begin{equation}\label{travregDE}
(\omega-1)\phi-f(\phi)+\omega \mathcal{M}\phi+A=0.
\end{equation}

\indent Equation $(\ref{regDE})$ has at least three conserved quantities,
\begin{equation*}
P(u)=\frac{1}{2}\int_{0}^{L}\big(u\mathcal{M}u-W(u)\big)dx,
\end{equation*}
\begin{equation*}
F(u)=\frac{1}{2}\int_{0}^{L}\big(u\mathcal{M}u+u^2\big)dx,
\end{equation*}
and 
\begin{equation*}
M(u)=\int_{0}^{L}udx.
\end{equation*}
Here, function $W$ denotes the primitive of $f$, that is, $W'=f$.\\
\indent Define $G(u)=P(u)+(\omega-1)F(u)+AM(u)$ and $Q(u)=\mu M(u)+\nu F(u)$, $\mu,\nu\in\mathbb{R}$. By assuming that $(H_0)$, $(H_1)$ and $(H_2)$ hold, we can repeat the arguments in Sections 2 and 3 in order to obtain similar results about the positivity of the linearized operator $\mathcal{L}=\omega\mathcal{M}+(\omega-1)-f'(\phi)$, as well as the construction of the Lyapunov function and, consequently, the orbital stability.

In addition,  let us assume that a similar condition as in $(H_3)$ is satisfied. If  $\eta$ and $\beta$ are as above,  consider
$$
 M_{\omega}(\phi)=\int_0^{L}\frac{\partial}{\partial\omega}\phi_{(\omega,A)}dx,\qquad  M_{A}(\phi)=\int_0^{L}\frac{\partial}{\partial A}\phi_{(\omega,A)}dx,
 $$
 and
 $$
 F_{\omega}(\phi)=\frac{1}{2}\int_0^{L}\frac{\partial}{\partial{\omega}}(\phi_{(\omega,A)}\mathcal{M}\phi_{(\omega,A)}+\phi_{(\omega,A)}^2)dx,\ \ F_{A}(\phi)=\frac{1}{2}\int_0^{L}\frac{\partial}{\partial{A}}(\phi_{(\omega,A)}\mathcal{M}\phi_{(\omega,A)}+\phi_{(\omega,A)}^2)dx.
 $$
Thus, the results of Proposition $\ref{propKpos}$ and Theorem $\ref{teostab}$ can be obtained. Moreover, if we assume that $\phi_{(\omega,A)}$  is a periodic traveling wave solution which solves equation $(\ref{travregDE})$ with $\omega$ and $A$ depending on the parameter $\xi$, a similar result as in Corollary $\ref{coro2}$ may be established.


\begin{thebibliography}{99}
	
\bibitem{absa} L. Abdelouhab, J.L. Bona, M. Felland, and J.-C. Saut, {\em Nonlocal models for nonlinear, dispersive equations},  Phys. D 40 (1989), 360--392.

\bibitem{al} J.P. Albert, {\em Positivity properties and stability of solitary-wave solutions
of model equations for long waves},  Commun. Partial Differential
Equations 17 (1992), 1--22.

\bibitem{albo} J.P. Albert and J.L. Bona, {\em Total positivity and the stability of
internal waves in stratified fluids of finite depth},  IMA J. Appl. Math.
46 (1991), 1--19.

\bibitem{abh} J.P. Albert, J.L. Bona, and D. Henry, {\em Sufficient conditions for instability of solitary-wave solutions of model equation for long waves},  Phys. D 24 (1987), 343--366.


\bibitem{ABS} J. Angulo, J. L. Bona, and M. Scialom, \textit{Stability of cnoidal waves},
Adv. Differential Equations 11 (2006),  1321--1374.

\bibitem{angulo1} J. Angulo, \textit{Nonlinear stability of periodic travelling-wave
solutions to the Schr\"odinger and modified Korteweg-de Vries equations},
J. Differential Equations 235 (2007), 1--30.

\bibitem{natali} J. Angulo and F. Natali, \textit{Positivity properties of the Fourier transform and the stability of periodic travelling-wave solutions}, SIAM J. Math. Anal. 40 (2008), 1123--1151.


\bibitem{an} J. Angulo, Nonlinear Dispersive Equations: Existence and Stability of Solitary and Periodic Travelling Waves Solutions, Mathematical Surveys and Monographs 156, Providence, 2009.
Berlin, 1995.

\bibitem{natali1}  J. Angulo, F. Natali, and E. Cardoso Jr., \textit{Stability properties of periodic traveling waves for the
intermediate long wave equation}, to appear in  Rev. Mat. Iberoam. (2016).

\bibitem{benjamin} T.B. {\em  Benjamin, Lectures on nonlinear wave motion}, In Nonlinear Wave Motion, Lecture Notes in Applied Mathematics, American Mathematical Society,
Providence, Vol 3. 3--45, 1974.

\bibitem{bss} J.L. Bona, P.E. Souganidis, and W.A. Strauss, {\em Stability and instability os solutary waves of Korteweg-de
Vries Type}, Proc. R. Soc. Lond. A 411 (1987), 395--412.

\bibitem{bo} J.L. Bona, {\em On the stability theory of solitary waves},  Proc Roy. Soc.
Lond. Ser. A 344 (1975), 363--374.


\bibitem{BJ} J.C. Bronski and M. Johnson,
\textit{The modulational instability for a generalized
Korteweg–de Vries equation}, Arch. Rational Mech. Anal. 197 (2010),  357–-400.

\bibitem{BJK1} J.C. Bronski, M. Johnson, and T. Kapitula,
\textit{An index theorem for the stability of periodic travelling waves of Korteweg-de Vries type}, Proc. Royal Soc. Edinb. A 141 (2011), 1141–-1173.


\bibitem{BJK2} J.C. Bronski, M. Johnson, and T. Kapitula,
\textit{An instability index theory for quadratic pencils and applications}, Comm. Math. Phys. 327 (2014), 521--550.


\bibitem{ncp} F. Crist\'ofani, F. Natali, and T.P. de Andrade, \textit{Orbital stability of periodic traveling wave solutions for the Kawahara equation}, preprint (2016).


 

\bibitem{DK1} B. Deconinck and T. Kapitula,
\textit{The orbital stability of the cnoidal waves of the Korteweg-de Vries equation},  Phys. Lett. A 374 (2010), 4018--4022.

\bibitem{gss1} M. Grillakis, J. Shatah, and W. Strauss, {\em
Stability theory of solitary waves in the presence of symmetry I},  J. Funct. Anal. 74 (1987), 160--197.

\bibitem{gss2} M. Grillakis, J. Shatah, and W. Strauss,
{\em Stability theory of solitary waves in the presence of
symmetry II},  J. Funct. Anal. {94} (1990), 308--348.

 \bibitem{hur} V.M. Hur and M. Johnson, \textit{Stability of periodic traveling waves
for nonlinear dispersive equations}, SIAM J. Math. Anal. 47 (2015), 3528-–3554.



\bibitem{johnson} M. Johnson, \textit{Nonlinear stability
of periodic traveling wave solutions of the generalized Korteweg-de
Vries equation}, SIAM J. Math. Anal. 41 (2009),  1921--1947.

\bibitem{johnson1} M. Johnson, \textit{On the stability of periodic solutions of the generalized Benjamin-Bona-Mahony equation}, Phys. D 239 (2010), 1892–-1908.

\bibitem{DK} T. Kapitula and B. Deconinck,
\textit{On the spectral and orbital stability of spatially periodic stationary solutions of generalized Korteweg-de Vries equations}, Hamiltonian Partial Diff. Eq. Appl. 75, 285--322,  Fields Institute Communications, Springer, New York, 2015.


\bibitem{ka} S. Karlin,  Total Positivity,
Stanford University Press, Stanford, 1968.



\bibitem{kato} T. Kato, Perturbation Theory for Linear Operators, Reprint of the 1980 Ed., Springer-Verlag, Berlin, 1995.

\bibitem{np} F. Natali and A. Pastor, \textit{The fourth-order dispersive nonlinear Schr\"odinger equation: orbital stability of a standing wave}, SIAM J. Appl. Dyn. Syst. 14 (2015), 1326-1347.


\bibitem{st} C.A. Stuart, {\em Lectures on the orbital stability of standing waves and applications to the nonlinear Schr\"odinger equation},  Milan J. Math. 76 (2008), 329--399.


\bibitem{we} M.I. Weinstein, {\em Lyapunov stability of ground states of nonlinear dispersive evolutions equations},  Comm. Pure Appl. Math. 39 (1986), 51--68.








\end{thebibliography}
\end{document}